\newtheorem{thm}{Theorem}[section]
\newcommand{\bt}{\begin{thm}}
\newcommand{\et}{\end{thm}}
\newtheorem{conj}[thm]{Conjecture}
\newtheorem{cor}[thm]{Corollary}  
\newcommand{\bc}{\begin{cor}}
\newcommand{\ec}{\end{cor}}
\newtheorem{lem}[thm]{Lemma}   
\newcommand{\bl}{\begin{lem}}
\newcommand{\el}{\end{lem}}
\newtheorem{prop}[thm]{Proposition}
\newcommand{\bp}{\begin{prop}}
\newcommand{\ep}{\end{prop}}
\newtheorem{defn}[thm]{Definition}
\newcommand{\bd}{\begin{defn}}    
\newcommand{\ed}{\end{defn}}
\newtheorem{rmrk}[thm]{Remark}
\newcommand{\R}{\mathbb{R}}
\newcommand{\N}{\mathbb{N}}
\newcommand{\lp}{\left (}
\newcommand{\rp}{\right )}
\newcommand{\Tor}{\mathbb{T}}     
\newcommand{\Fto}{\stackrel {\mathcal{F}}{\longrightarrow} }
\newcommand{\VFto}{\stackrel {\mathcal{VF}}{\longrightarrow} }
\DeclareMathOperator{\Vol}{Vol}
\DeclareMathOperator{\Diam}{Diam}
\def\Xint#1{\mathchoice
{\XXint\displaystyle\textstyle{#1}}%
{\XXint\textstyle\scriptstyle{#1}}%
{\XXint\scriptstyle\scriptscriptstyle{#1}}%
{\XXint\scriptscriptstyle\scriptscriptstyle{#1}}%
\!\int}
\def\XXint#1#2#3{{\setbox0=\hbox{$#1{#2#3}{\int}$ }
\vcenter{\hbox{$#2#3$ }}\kern-.6\wd0}}
\def\dashint{\Xint-}
\title{Almost non-negative scalar curvature on Riemannian manifolds conformal to tori}
\author{Brian Allen}
\address[Brian Allen]{University of Hartford}
\email{brianallenmath@gmail.com}
\begin{document}
\date{\today}
\begin{abstract}
In this article we reduce the geometric stability conjecture for the scalar torus rigidity theorem to the conformal case via the Yamabe problem. Then we are able to prove the case where a sequence of Riemannian manifolds is conformal to a uniformly controlled sequence of flat tori and satisfies the geometric stability conjecture. We are also able to handle the case where a sequence of Riemannian manifolds is conformal to a sequence of constant negative scalar curvature Riemannian manifolds which converge to a flat torus in $C^1$. The full conjecture from the conformal perspective is also discussed as a possible approach to resolving the conjecture.
\end{abstract}
\maketitle
\section{Introduction} 

The Scalar Torus Rigidity Theorem says that a Riemannian manifold with non-negative scalar curvature which is diffeomorphic to a torus must be isometric to a flat torus. In 1979, Schoen and Yau \cite{SY79} were able to prove the Scalar Torus Rigidity Theorem using minimal surface techniques which can now be extended to higher dimensions. In 1980, Gromov and Lawson \cite{GL80} gave a proof in all dimensions using the Lichnerowicz formula. 

In 2014, Gromov \cite{GroD} suggested that there should be a geometric stability result corresponding to the Scalar Torus Rigidity Theorem. More specifically, a sequence of Riemannian manifolds with almost non-negative scalar curvature, which are diffeomorphic to flat tori, combined with appropriate compactness conditions should converge to flat tori. Below we give a version of the conjecture of Gromov in the author's words.

\begin{conj}\label{MainConjecture}
Let $M_j = (\mathbb{T}^n, g_j)$, $n \ge 3$ be a sequence of Riemannian manifolds such that 
\begin{align} \label{HypothesisConjecture}
R_{g_j} \ge -\frac{1}{j}, \,\,\, V_0 \le \Vol(M_j) \le V^0 \,\,\, \text{and} \,\,\, \Diam(M_j) \le D_0  ,
\end{align}
where $R_{g_j}$ is the scalar curvature. 
If no bubbling occurs along the sequence then there is a subsequence of $M_j$ converging in the volume preserving intrinsic flat sense to a flat torus
\begin{align}
M_{j_k} \VFto \mathbb{T}^n.
\end{align} 
\end{conj}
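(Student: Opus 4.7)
The plan is to follow the conformal reduction strategy outlined in the abstract: use the Yamabe problem to replace each $g_j$ by a conformally related representative $\tilde g_j$ of constant scalar curvature, and then appeal to the partial results proved in this paper for sequences conformal to controlled flat or constant negative scalar curvature tori.

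First I would solve the Yamabe problem on each conformal class $[g_j]$, producing a positive conformal factor $u_j$ and a Yamabe representative $\tilde g_j = u_j^{4/(n-2)} g_j$ of constant scalar curvature $\tilde R_j$, normalized so that $\Vol(\tilde g_j) = 1$ (hence $\tilde R_j$ equals the Yamabe constant of $[g_j]$). Because $\mathbb{T}^n$ admits no metric of positive scalar curvature, every conformal class on $\mathbb{T}^n$ satisfies $Y([g_j]) \le 0$, so $\tilde R_j \le 0$. On the other hand, a standard test-function argument using the hypothesis $R_{g_j} \ge -1/j$, H\"older's inequality, and the volume upper bound gives a matching lower bound of the form $\tilde R_j \ge -C/j$ with $C = C(V^0)$, and hence $\tilde R_j \to 0$.

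Next I would show that the normalized Yamabe sequence $\tilde g_j$ subconverges to a flat torus. Under the no-bubbling assumption of Conjecture \ref{MainConjecture}, standard elliptic theory applied to the Yamabe equation $-\frac{4(n-1)}{n-2} \Delta_{g_j} u_j + R_{g_j} u_j = \tilde R_j u_j^{(n+2)/(n-2)}$, combined with the uniform geometric bounds on $g_j$ and the choice of a fixed flat background metric on $\mathbb{T}^n$, should yield uniform two-sided bounds on $u_j$ and $C^1$ subconvergence of $\tilde g_j$ to a limit $g_\infty$ with vanishing scalar curvature; the rigidity theorem then forces $g_\infty$ to be a flat torus. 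This places the conformally related sequence $g_j = u_j^{-4/(n-2)} \tilde g_j$ into the hypotheses of the two theorems already proved in this paper: the flat-target case if $\tilde R_j \equiv 0$ (in which case $\tilde g_j$ is itself flat by rigidity) and the constant negative scalar curvature case converging to a flat torus in $C^1$ otherwise. Applying those theorems yields the desired volume preserving intrinsic flat convergence of $g_j$ to a flat torus.

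The main obstacle is the quantitative control of the conformal factor $u_j$. The Yamabe equation has critical Sobolev exponent, so a priori the $u_j$ can concentrate and form bubbles; the no-bubbling hypothesis is exactly what should rule this out, but translating it into uniform two-sided pointwise bounds on $u_j$ and a quantitative rate $u_j \to 1$ as $\tilde R_j \to 0$ is where the delicate analysis lies. The volume lower bound $V_0 \le \Vol(M_j)$ prevents total collapse of $u_j$, the diameter bound prevents escape of mass, and together these should allow one to extract a dichotomy in which the absence of bubbling is equivalent to compactness of $u_j$ strong enough for the two results of this paper to transfer convergence of $\tilde g_j$ back to convergence of $g_j$ itself.
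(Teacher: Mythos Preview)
The statement you are trying to prove is labeled \textbf{Conjecture} \ref{MainConjecture}, and the paper does not give a proof of it: it is stated as an open problem, and the paper establishes only the partial results Theorems \ref{MainThm}, \ref{MainThm2}, and \ref{MainThm3} under additional hypotheses. So there is no ``paper's own proof'' to compare against; instead I evaluate whether your outline would actually prove the conjecture.

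Your strategy---apply Yamabe to obtain a constant scalar curvature representative $\tilde g_j$, show $\tilde R_j\to 0$, and then invoke Theorems \ref{MainThm} or \ref{MainThm3}---is exactly the conformal reduction the paper proposes in Section \ref{sect:Reduction} and discusses in Remark \ref{remark:MainThm2Discussion}. The genuine gap is the step where you write that no-bubbling ``should yield uniform two-sided bounds on $u_j$ and $C^1$ subconvergence of $\tilde g_j$.'' This is precisely the open analytic problem the paper flags: the Yamabe representatives $\tilde g_j$ can degenerate (a non-uniformly controlled sequence of flat tori, or a sequence with $R_{0,j}=-1$ and $\Vol\to 0$) even while $g_j$ does not, and nothing in the conjecture's hypotheses is known to prevent this. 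The paper's partial results do not merely need the Yamabe reduction; they impose \emph{extra} hypotheses---uniform coordinate bounds on the flat background $g_{0,j}$, the integral bound $\int_{\mathbb{T}^n} e^{-2f_j}\,dV_{g_{0,j}}\le C$, the uniform integrability condition \eqref{UniformIntegrability}, or $C^1$ convergence of $\tilde g_{0,j}$ to a flat torus---none of which you have derived from the bare assumptions \eqref{HypothesisConjecture} plus ``no bubbling.'' Indeed, the conjecture does not even specify what ``no bubbling'' means, so you cannot rigorously translate it into the pointwise control on $u_j$ that your argument requires.

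In short, your outline recapitulates the paper's proposed approach but does not close the gap the paper explicitly identifies as open; the phrases ``should yield'' and ``is exactly what should rule this out'' mark unproved steps rather than completed ones.
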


Without the upper bound on diameter it is possible for a sequence to converge to a cylinder or even Euclidean space. Without the lower  bound on volume a sequence could collapse to a circle or a point, even if one assumes nonnegative sectional curvature. We also note that one needs to make precise the condition which one will add to prevent bubbling from occurring along the sequence because otherwise a sequence could converge to a torus with a sphere attached. In the main theorems below we will see the author's choice for avoiding bubbling in the $n-$dimensional case. In addition, in section \ref{sect:Reduction} the author discusses how Conjecture \ref{MainConjecture} can be reduced to the conformal case by taking advantage of the resolution of the Yamabe problem.

The first results in this direction are given by Gromov in 2014 \cite{GroD} where he assumes that a sequence of tori with almost non-negative scalar curvature converges in the $C^0$ sense then one can show $C^0$ convergence of the sequence to a flat torus. Subsequently, Bamler in 2016 \cite{Bamler-16}  was able to show the same result using Ricci flow.

There has also been progress on the conjecture, made precise by Sormani \cite{Sormani-scalar}, where the MinA condition is what rules out bubbling in Conjecture \ref{MainConjecture} for dimension $n=3$. In \cite{AHMPPW}, the author, Hernandez-Vazquez, Parise, Payne, and Wang study the warped product case of the conjecture by Sormani where we were able to show uniform, GH, and SWIF convergence when the sequence is either a doubly warped product or a singly warped product. In \cite{PKP19}, Cabrera Pacheco, Ketterer, and Perales study the graphical case of the Sormani conjecture and by adding some conditions which are motivated by the work of Huang and Lee \cite{HL} and the work of Huang, Lee, and Sormani \cite{HLS} they were able to show volume preserving intrinsic flat convergence for an interesting class of graphs over flat tori which satisfy Conjecture \ref{MainConjecture}. Recently, Lee, Naber, and Neumayer \cite{LNN} prove a version of Gromov's conjecture under $d_p$ convergence where a lower bound on entropy is assumed with many interesting examples for dimension $n \ge 4$ given in Section 9.

In this paper we study Conjecture \ref{MainConjecture} using the conformal geometry implied by the resolution of the Yamabe problem. In particular we resolve the case where the sequence satisfying the hypotheses of Conjecture \ref{MainConjecture} is conformal to a uniformly controlled sequence of flat tori. We note that the lower bound on volume in Conjecture \ref{MainConjecture} is necessary to prevent collapsing, but is not as strong as the MinA condition, and specifically is not strong enough to prevent bubbling. In the conformal case we require an upper bound on an integral of a negative power of the conformal factor which will prevent collapsing. This condition is weaker than a uniform lower bound on the conformal factor but stronger than a lower bound on volume.

Bubbling is the phenomenon by which a cylindrical bridge between a torus minus a ball and a bubble with the toplogy of a ball is formed with almost nonnegative scalar curvature. Since the bubble is separated from the rest of the torus thorugh the arbitrarily thin neck it is not subject to the same rigidity like the rest of the torus. Hence the bubble is free to take on any geometry it wishes as long as the scalar curvature is almost non-negative and in the limit one expects the cylindrical bridge to contract to a point leaving the bubble to be attached to a flat torus at a point.  This is why one needs to either rule out these bubbles in a precise statement of the conjecture or locate and cut out these bubbles somehow. 

From the conformal point of view we will see that bubbling presents itself through singularities of the conformal factor where volume concentrates along a set of measure zero as in Example 3.5 of the author and Sormani \cite{Allen-Sormani-2}. In that example the authors showed that the conformal factor is blowing up at a point, bounded in $L^n$, and not bounded in $L^p$ for $p > n$. Geometrically the authors show that the sequence converges to a flat torus with a disk attached so that the boundary of the disk is identified with a point on the torus. Analytically, what is happening in this example is a consequence of the fact that bounded $L^1$ functions are not weakly compact in $L^1$ and hence may converge to a measure instead. So the volume of the conformal metric, given by integrating $e^{nf_j}$, is a bounded $L^1$ function which is concentrating along a set of measure zero and hence converging to a measure. By the Dunford-Pettis Theorem it is well known that if one adds a equiintegrable assumption to a sequence of functions bounded in $L^1$ then the sequence is weakly compact in $L^1$. Hence, we rule out bubbling through a equiintegrable type condition \eqref{UniformIntegrability} which will not allow bubbling to occur through the concentration of volume on a set of measure zero.

 We do note that condition \eqref{UniformIntegrability} does allow for arbitrarily many bubbles to form along a sequence as long as their overall volume shrinks to zero in the limit so that the hypotheses of Theorem \ref{MainThm} are satisfied. Also, this condition allows for arbitrarily many splines to form along a sequence, as in Example 3.7 and 3.8 of the author and Sormani \cite{Allen-Sormani-2}, which is the main prototypical examples of Conjecture \ref{MainConjecture}. In \cite{Allen-Sormani-2} the authors show that Example 3.7 and 3.8 are bounded in $L^p$ for $p >n$ and hence by Lemma \ref{L^3 Agrees with C^0 2} we see that they satisfy the equiintegrable condition \eqref{UniformIntegrability}.  This condition can be imposed for any dimension and can be used for the full conjecture when viewed through the conformal point of view. This condition shows up in both Theorem \ref{MainThm} and Theorem \ref{MainThm3}.

\begin{thm}\label{MainThm}
For a sequence of Riemannian $n-$manifolds $M_j=(\mathbb{T}^n,g_j)$, $n \ge 3$ satisfying 
 \begin{align} \label{HypothesisMainThm}
R_{g_j} \ge -\frac{1}{j}, \,\,\,  \,\,\, \Diam(M_j) \le D_0, \,\,\,  \,\,\, \Vol(M_j) \le V_0,
\end{align}
which is conformal to a flat torus $\mathbb{T}_j^n$, i.e.  $g_j = e^{2f_j} g_{0,j}$ where $\mathbb{T}^n_j=(\mathbb{T}^n,g_{0,j})$, so that if $\theta_i \in [0,2\pi], 1 \le i \le n,$ are coordinates on $\Tor^n$ then
\begin{align}
0<c\le|(g_{0,j})_{ik}|\le C<\infty, \quad (g_{0,j})_{ik} \in \R,\quad 1 \le i,k\le n,
\end{align}
\begin{align}
\int_{\mathbb{T}^n} e^{-2f_j} d V_{g_{0,j}} \le C,
\end{align} 
and
\begin{align}\label{UniformIntegrability}
\exists q > 0 \text{ so that } \forall E \subset \mathbb{T}^n \text{ measurable } \Vol_{g_j}(E) \le C \Vol_{g_{0,j}}(E)^q,
 \end{align}
then there exists a subsequence so that $M_k$ converges in the volume preserving intrinsic flat sense to a flat torus
\begin{align}
M_k \VFto \bar{\mathbb{T}}_{\infty}^n,
\end{align}
where $\bar{\mathbb{T}}_{\infty}^n = (\Tor^n,\bar{g}_{\infty}=c_{\infty}^2g_{\infty})$,
\begin{align}
c_{\infty}^2&=\displaystyle\lim_{k \rightarrow \infty}(\overline{e^{-f_k}})^{-2}= \lim_{k \rightarrow \infty} \left( \dashint_{\mathbb{T}^n} e^{-f_k}dV_{g_{0,j}} \right)^{-2},
\end{align}
and 
\begin{align}
0<c\le|(g_{\infty})_{ik}|\le C<\infty, \quad (g_{\infty})_{ik} \in \R,\quad 1 \le i,k\le n.
\end{align}
\end{thm}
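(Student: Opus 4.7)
The strategy is to use the conformal scalar curvature equation to show that $f_j$ becomes nearly constant in $L^2$, to use the integrability hypotheses to identify the limiting constant, and then to invoke a volume preserving intrinsic flat convergence theorem for conformal sequences to upgrade the convergence. First, since $(g_{0,j})_{ik}\in[c,C]$ are real constants, compactness yields a subsequence with $g_{0,j}\to g_\infty$ in $C^\infty(\Tor^n)$ where $g_\infty$ is a flat torus metric, with uniformly equivalent volume forms and Poincar\'e constants.

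For the Dirichlet energy decay, the conformal change formula with $R_{g_{0,j}}\equiv 0$ gives
\begin{align}
R_{g_j}\,e^{2f_j} \;=\; -2(n-1)\,\Delta_{g_{0,j}} f_j \;-\; (n-1)(n-2)\,|\nabla_{g_{0,j}} f_j|^2.
\end{align}
Integrating over the closed torus annihilates the Laplacian, and $R_{g_j}\ge -1/j$ combined with H\"older's inequality and the volume bound $\int e^{nf_j}\,dV_{g_{0,j}}\le V_0$ produces
\begin{align}
(n-1)(n-2)\int|\nabla f_j|^2\,dV_{g_{0,j}} \;=\; -\!\int R_{g_j}\,e^{2f_j}\,dV_{g_{0,j}} \;\le\; \frac{1}{j}\int e^{2f_j}\,dV_{g_{0,j}} \;=\; O(1/j).
\end{align}
Uniform Poincar\'e then gives $\|f_j-\overline{f_j}\|_{L^2(g_{0,j})}\to 0$, and Jensen applied to the convex exponentials $e^{nt}$ and $e^{-2t}$ with the hypotheses $\int e^{nf_j}\le V_0$ and $\int e^{-2f_j}\le C$ yields a two-sided bound on $\overline{f_j}$. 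Passing to a further subsequence, $\overline{f_j}\to f_\infty\in\R$, hence $f_j\to f_\infty$ in $L^2(g_{0,j})$ and in measure.

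The $L^1$-bound $\int e^{-2f_j}\le C$ makes $e^{-f_j}$ uniformly integrable, so by Vitali $e^{-f_j}\to e^{-f_\infty}$ in $L^1$; averaging identifies $c_\infty = e^{f_\infty}$, so the candidate limit $\bar g_\infty = c_\infty^2 g_\infty = e^{2f_\infty}g_\infty$ matches the statement. Hypothesis \eqref{UniformIntegrability} says precisely that $e^{nf_j}$ is uniformly integrable with respect to $dV_{g_{0,j}}$, upgrading convergence in measure to $L^1$ convergence of the volume forms $dV_{g_j}\to dV_{\bar g_\infty}$; this is what rules out the bubbling phenomenon discussed in the introduction.

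Finally, feed this convergence of the conformal factor into a volume preserving intrinsic flat convergence theorem for conformal sequences on a smoothly varying background, in the spirit of \cite{Allen-Sormani, Allen-Sormani-2}, which converts $L^2$ convergence of the conformal factor combined with the no-bubbling condition \eqref{UniformIntegrability} into $M_k\VFto\bar{\Tor}_\infty^n$. The main obstacle is precisely this last step: without uniform $L^\infty$ or $C^0$ control on $f_j$, one must construct an integral current structure on $M_j$ close to that of $\bar{\Tor}_\infty^n$ after excising a small-$g_{0,j}$-volume region where $f_j$ may concentrate, and use \eqref{UniformIntegrability} to guarantee that the excised region has vanishing $g_j$-volume and therefore contributes negligibly to the intrinsic flat distance.
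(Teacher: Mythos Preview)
Your $L^2$ analysis and volume convergence are essentially the paper's, but the final paragraph contains a genuine gap rather than a minor technicality. There is no black-box theorem in \cite{Allen-Sormani, Allen-Sormani-2} that converts $L^2$ convergence of the conformal factor plus uniform integrability into $\mathcal{VF}$ convergence; on the contrary, the examples there (e.g.\ Example~3.1, 3.4 of \cite{Allen-Sormani-2}) show that without a pointwise lower bound on the metric, shortcuts form and intrinsic flat convergence to the expected limit fails. The excision idea you sketch is exactly what the APS machinery (Theorem~\ref{VolSWIFConv}) is designed to handle, but that theorem requires the explicit hypothesis $(1-C/j)\,\bar g_\infty \le g_j$, and you never establish it.

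The idea you are missing is that the scalar curvature inequality actually gives a pointwise $C^0$ lower bound on $e^{2f_j}$, not just $L^2$ control. Multiplying the conformal scalar curvature equation by $e^{-2f_j}$ (rather than integrating directly) yields
\[
-(n-1)\,\Delta^{g_{0,j}} e^{-2f_j} \;+\; (n-1)(n+2)\,|\nabla^{g_{0,j}} e^{-f_j}|^2 \;\le\; \tfrac{1}{j},
\]
so in particular $-\Delta^{g_{0,j}} e^{-2f_j} \le \tfrac{1}{(n-1)j}$. After adding a small auxiliary function to absorb the right-hand side, $e^{-2f_j}$ is subharmonic and the mean value inequality gives $e^{-2f_j}(x) \le \dashint_{B(x,r)} e^{-2f_j}\,dV_{g_{0,j}} + o(1)$. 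Your Poincar\'e estimate (applied to $e^{-f_j}$ rather than $f_j$, using the gradient term above) shows the right-hand side converges \emph{uniformly in $x$} to $(\overline{e^{-f_j}})^2$, which is bounded by the hypothesis $\int e^{-2f_j} \le C$. This produces $\liminf_j \min_{\Tor^n} e^{2f_j} \ge c_\infty^2$, i.e.\ exactly the metric lower bound $(1-C/k)\,\bar g_\infty \le g_k$ needed to invoke Theorem~\ref{VolSWIFConv}. With that bound in hand, your volume convergence finishes the proof cleanly with no excision required.
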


We can also rule out a possibility in the case where the sequence of Riemannian manifolds satisfying conjecture \ref{MainConjecture} is conformal to a manifold with constant negative scalar curvature.

\begin{thm}\label{MainThm2}
If $M_j = (\mathbb{T}^n, g_j)$, $n \ge 3$ is a sequence of Riemannian manifolds such that 
\begin{align} \label{HypothesisConjecture}
R_{g_j} \ge -\frac{1}{j}\,\,\, \text{and} \,\,\,  \Vol(M_j) \le V_0    ,
\end{align}
so that $M_j$ is conformal to $M_{0,j}$ where the scalar curvature $R_{g_{0,j}} = -1$ then 
\begin{align}
Vol(M_{0,j}) \le \frac{V_0}{j^{3/2}}.
\end{align}
In other words, if $M_j$ is conformal to $\tilde{M}_{0,j}$ with $\tilde{R}_{g_{0,j}} <0$ and $Vol(\tilde{M}_{0,j})=1$ then
\begin{align}
0 > \tilde{R}_{g_{0,j}} \ge -\frac{V_0^{2/3}}{j}. 
\end{align}
\end{thm}

In our last main theorem we notice that if the sequence of background metrics of constant negative scalar curvature converge to a flat torus in $C^1$ then we can also guarantee volume preserving intrinsic flat convergence.

\begin{thm}\label{MainThm3}
For a sequence of Riemannian $n-$manifolds $M_j$, $n \ge 3$ satisfying  satisfying the \begin{align} \label{HypothesisMainThm2}
R_{g_j} \ge -\frac{1}{j},   \,\,\, \Diam(M_j) \le D_0, \,\,\,  \,\,\, \Vol(M_j) \le V_0,
\end{align}
 so that $M_j$ is conformal to $\tilde{M}_{0,j}=(M,\tilde{g}_{0,j})$, a metric with constant negative scalar curvature and unit volume, i.e.  $g_j = e^{2f_j} g_{0,j}$, so that
\begin{align}
\tilde{g}_{0,j} \rightarrow g_0 \text{ in } C^1,
\end{align}
 where $g_0$ is a flat torus where $\mathbb{T}^n_0=(\mathbb{T}^n,g_0)$, so that if $\theta_i \in [0,2\pi], 1 \le i \le n,$ are coordinates on $\Tor^n$ then
\begin{align}
0<c\le|(g_0)_{ik}|\le C<\infty, \quad (g_0)_{ik} \in \R,\quad 1 \le i,k\le n,
\end{align}
\begin{align}
\int_{\mathbb{T}^n} e^{-2f_j} d V_{\tilde{g}_{0,j}} \le C,
\end{align} 
and
\begin{align}
\exists q > 0 \text{ so that } \forall E \subset \mathbb{T}^n \text{ measurable }\Vol_{g_j}(E) \le C \Vol_{g_0}(E)^q,
 \end{align}
then there exists a subsequence so that $M_k$ converges in the volume preserving intrinsic flat sense to a flat torus
\begin{align}
M_k \VFto \bar{\mathbb{T}}_0^n,
\end{align}
where $\bar{\mathbb{T}}_0^n = (\Tor^n,\bar{g}_0=c_{\infty}^2g_0)$,
\begin{align}
c_{\infty}^2&=\displaystyle\lim_{k \rightarrow \infty}(\overline{e^{-f_k}})^{-2}= \lim_{k \rightarrow \infty} \left( \dashint_{\mathbb{T}^n} e^{-f_k}dV_{\tilde{g}_{0,j}} \right)^{-2}.
\end{align}
\end{thm}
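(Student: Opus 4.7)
The approach is to emulate the proof strategy of Theorem~\ref{MainThm}, exploiting Theorem~\ref{MainThm2} to neutralize the contribution of the non-flat background scalar curvature and the $C^1$ convergence $\tilde g_{0,j}\to g_0$ to effectively treat $\tilde g_{0,j}$ as flat on the fixed torus. The key PDE is the scalar curvature transformation law for $g_j = e^{2f_j}\tilde g_{0,j}$, namely
\begin{align*}
R_{g_j}\, e^{2f_j} = \tilde R_{g_{0,j}} - 2(n-1)\Delta_{\tilde g_{0,j}} f_j - (n-1)(n-2)|\nabla f_j|^2_{\tilde g_{0,j}}.
\end{align*}
Integrating over the closed manifold $\tilde M_{0,j}$ eliminates the Laplacian term and gives
\begin{align*}
(n-1)(n-2)\int|\nabla f_j|^2\, dV_{\tilde g_{0,j}} = \tilde R_{g_{0,j}} - \int R_{g_j}\, e^{2f_j}\, dV_{\tilde g_{0,j}}.
\end{align*}
A H\"older estimate with exponents $n/2$ and $n/(n-2)$ combined with $\Vol(M_j) = \int e^{nf_j}\, dV_{\tilde g_{0,j}} \le V_0$ and $\Vol(\tilde M_{0,j}) = 1$ produces $\int e^{2f_j}\, dV_{\tilde g_{0,j}} \le V_0^{2/n}$, so the hypothesis $R_{g_j}\ge -1/j$ bounds $\int R_{g_j}\, e^{2f_j}\, dV_{\tilde g_{0,j}} \ge -V_0^{2/n}/j$ from below. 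A Jensen estimate applied to $\int e^{-2f_j}\, dV_{\tilde g_{0,j}} \le C$ also supplies a lower volume bound $\Vol(M_j)\ge C^{-n/2}$, so Theorem~\ref{MainThm2} applies and delivers $|\tilde R_{g_{0,j}}| \le V_0^{2/3}/j$. Combining these bounds extracts the decisive estimate
\begin{align*}
\int|\nabla f_j|^2\, dV_{\tilde g_{0,j}} \le \frac{C}{j}.
\end{align*}
The Poincar\'e inequality on $(\Tor^n,\tilde g_{0,j})$, uniform in $j$ thanks to the $C^1$ convergence, then forces $f_j - \bar f_j \to 0$ in $L^2(\tilde g_{0,j})$.

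The next step is to identify the limiting constant. The $L^2$ convergence of $f_j - \bar f_j$ together with the equi-integrability of $e^{-f_j}$ supplied by $\int e^{-2f_j}\, dV_{\tilde g_{0,j}} \le C$ permit passage to the limit in $\dashint e^{-f_j}\, dV_{\tilde g_{0,j}}$ via Vitali or Scheff\'e, so the formula $c_\infty = \lim \bigl(\dashint e^{-f_k}\,dV_{\tilde g_{0,k}}\bigr)^{-1}$ is well-defined along a subsequence and $e^{2f_j}\to c_\infty^2$ in $L^2$. The $C^1$ convergence $\tilde g_{0,j}\to g_0$ then automatically upgrades $L^2(\tilde g_{0,j})$ convergence to $L^2(g_0)$ convergence, giving $g_j \to c_\infty^2 g_0$ in $L^2$ on the fixed flat background.

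Finally, this $L^2$ convergence of the conformal factors is upgraded to volume preserving intrinsic flat convergence $M_k \VFto \bar{\mathbb{T}}_0^n$ by invoking the intrinsic flat convergence criterion of \cite{Allen-Sormani,Allen-Sormani-2} used in Theorem~\ref{MainThm}, where the uniform integrability hypothesis is precisely what is needed to rule out bubbling concentrations of volume in the limit. The main obstacle is the PDE step: only a one-sided bound on $R_{g_j}$ is available, so the right-hand side of the integrated identity has no a priori sign, and the H\"older estimate on $\int e^{2f_j}\,dV_{\tilde g_{0,j}}$ (which uses the natural upper volume bound rather than the hypothesized upper bound on $\int e^{-2f_j}$) must be combined with the background estimate supplied by Theorem~\ref{MainThm2} to close the argument. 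The reliance on Theorem~\ref{MainThm2}, both through its quantitative consequence and through the need to verify its hypotheses, is the technical heart of the reduction.
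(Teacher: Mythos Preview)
Your integrated gradient estimate and Poincar\'e argument are fine and do give $L^2$ smallness of $f_j - \bar f_j$. (Incidentally, you do not need Theorem~\ref{MainThm2} for this: since $\tilde R_{g_{0,j}}<0$, that term on the right-hand side of your integrated identity can simply be dropped, and the remaining $V_0^{2/n}/j$ already suffices.) The real gap is in the final step. You assert that $L^2$ convergence of the conformal factors, together with the uniform integrability hypothesis, can be ``upgraded'' to volume preserving intrinsic flat convergence via the criteria in \cite{Allen-Sormani,Allen-Sormani-2}. That is not how the argument in Theorem~\ref{MainThm} runs, and $L^p$ convergence of conformal factors alone does \emph{not} imply intrinsic flat convergence to the expected limit: the examples in \cite{Allen-Sormani-2} discussed in Section~\ref{subsect:BackgroundRelatingContrasting} exhibit sequences converging in $L^p$ whose intrinsic flat limits are not the naive limit manifold, because shortcuts form on sets of small measure. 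The criterion actually invoked, Theorem~\ref{VolSWIFConv} from \cite{APS}, requires the pointwise lower bound
\[
\Bigl(1 - \tfrac{C}{k}\Bigr)\,\bar g_0(v,v) \;\le\; g_k(v,v) \qquad \forall\, v\in T_pM,
\]
in addition to volume convergence, and it is this $C^0$ control from below that your proposal never establishes.

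The paper obtains this lower bound by a pointwise differential inequality rather than an integrated identity. Multiplying the scalar curvature inequality through by $e^{-2f_j}$ gives
\[
-(n-1)\,\Delta^{\tilde g_{0,j}} e^{-2f_j} + (n-1)(n+2)\,\bigl|\nabla^{\tilde g_{0,j}} e^{-f_j}\bigr|^2 \;\le\; \tfrac{1}{j} + \tilde R_{0,j}\, e^{-2f_j} \;\le\; \tfrac{1}{j},
\]
where $\tilde R_{0,j}<0$ is discarded; the $C^1$ closeness of $\tilde g_{0,j}$ to $g_0$ is then used to transfer this to an inequality with respect to the fixed flat metric, yielding $-\Delta^{g_0} e^{-2f_j} \le C/j$. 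From this near-subharmonicity one adds a barrier $e^{\alpha\theta_1}$ and applies the mean value property (Lemma~\ref{C0Control1}) to obtain $\max_{\Tor^n} e^{-2f_j} \le \max_x \dashint_{B(x,r)} e^{-2f_j} + o(1)$; the $L^2$ convergence of $e^{-f_j}$ to its average then controls the right-hand side and yields $\min_{\Tor^n} e^{2f_j} \ge c_\infty^2 - o(1)$. The uniform integrability hypothesis enters only to secure the volume convergence $\Vol(M_k)\to c_\infty^n\,\Vol(\Tor_0^n)$ via an Egoroff argument; it does not replace the missing pointwise metric lower bound.
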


\begin{rmrk}\label{remark:MainThm2Discussion}
Note that in particular Theorem \ref{MainThm2} shows that it is not possible to have a sequence of manifolds $M_j$ satisfying Conjecture \ref{MainConjecture} which is conformal to a fixed background metric of constant negative scalar curvature. In particular, Theorem \ref{MainThm2} implies that the background sequence of Yamabe metrics, $g_{0,j}$, with $\Vol(M_{0,j})=1$ must be a sequence which converges to the Yamabe constant of the torus which is zero. One possibility for $g_{0,j}$ is that it is a sequence of uniformly controlled flat tori which is explored in Theorem \ref{MainThm}. Another possibility is that $g_{0,j}$ converges to a flat torus in $C^1$ which is explored in Theorem \ref{MainThm3}. 

 We also note that the main theorems of this paper show that the remaining cases of Conjecture \ref{MainConjecture} from the conformal perspective arise from the possibility of a sequence $M_j$ being conformal to a non-uniformly controlled sequence of flat tori $M_{0,j}$ or conformal to a sequence $M_{0,j}$ of Riemannian manifolds with $\Vol(g_{0,j})=1$ and $R_{g_{0,j}} \nearrow 0$ which is not uniformly controlled or which does not converge in $C^1$ to a flat torus. This is because the resolution of the Yamabe problem implies that any metric satisfying Conjecture \ref{MainConjecture} must be conformal to a metric with constant scalar curvature $\le 0$ (See section \ref{sect:Reduction}) combined with Theorem \ref{MainThm2}. 
 
 The difficulty of the remaining cases seems to be that the background sequence $M_{0,j}$ could be degenerating  whereas  the sequence $M_j$ could not be degenerating and hence satisfies Conjecture \ref{MainConjecture}. This would make the analysis quite delicate but gives a possible approach to the full conjecture. See the work of Anderson \cite{Anderson97, Anderson99} for a discussion of the convergence of degenerating constant scalar curvature sequences as well as the discussion at the end of section \ref{sect:Reduction}. 
\end{rmrk}

In section \ref{sec: background}, we give the reader intuition for volume preserving intrinsic flat convergence through a new estimate by the author, Perales, and Sormani \cite{Allen-Perales-Sormani}, as well as other standard definitions, and state the main theorem of \cite{Allen-Perales-Sormani} which will be used to complete the proof of the main theorems. 

In section \ref{sect:Reduction}, we remind the reader of the Yamabe problem and discuss its relation to the conformal perspective of Conjecture \ref{MainConjecture}. The proof of Theorem \ref{MainThm2} is given in this section as well by noticing a consequence of the scalar curvature formula for metrics conformal to metrics with constant negative scalar curvature.

In section \ref{sect:ConfFlatCase}, we obtain important estimates by studying the PDE inequality which follows from the scalar curvature formula for conformal metrics. We obtain important uniform bounds, $L^p$ bounds, and Sobolev convergence which allows us to apply the main theorem of \cite{Allen-Perales-Sormani} to finish the proof of Theorem \ref{MainThm}.

\noindent{\bf Acknowledgements:} This research was funded in part by NSF DMS - 1612049. The author would like to thank Christina Sormani for funding provided during the 2020 Virtual Workshop on Ricci and Scalar Curvature. I would also like to thank the organizers of this workshop Christina Sormani, Guofang Wei, Hang Chen, Lan-Hsuan Huang, Pengzi Miao, Paolo Piazza, Blaine Lawson, and Richard Schoen for the opportunity to give a plenary talk.

\section{Background} \label{sec: background}
\noindent In this section, we review definitions and theorems that will be used throughout the paper.

\subsection{Notation}
We denote the volume and diameter of a Riemannian manifold $M$ as $\Vol(M)$ and $\Diam(M)$. For a measurable subset $E \subset \mathbb{T}^n$ we denote the measure of $E$ with respect to a Riemannian metric $g$ as $\Vol_g(E)$ and the corresponding measure as $dV_g$. The distance function associated to a Riemannian metric $g$ will be denoted $d_g$, the scalar curvature as $R_g$, the gradient as $\nabla^g$, and the Laplacian as $\Delta^g$. $\mathbb{T}^n$ will stand for the flat torus with metric $g_{\Tor^n}=d\theta_1^2+...+d\theta_n^2$, $\theta_i \in [0,2\pi]$, $1 \le i \le n$. For a flat torus $\bar{g}$ defined on $\Tor^n$ with coordinates $\theta_i \in [0,2\pi]$, $1 \le i \le n$ with coordinate vectors $\partial_{\theta_i}$, $1 \le i \le n$ we let $\bar{g}_{ik}=\bar{g}(\partial_{\theta_i},\partial_{\theta_k})\in \R$. We denote a metric ball in a Riemannian manifold $M$ as $B_M(x,r)$, $x\in M$, $r>0$.

\subsection{Sormani-Wenger Intrinsic Flat Convergence}\label{subsect:BackgroundSWIF}

The intrinsic flat distance  was defined by Sormani and Wenger \cite{SW-JDG} on a large class of metric spaces called integral current spaces. In particular, oriented, compact Riemannian manifolds are integral current spaces and the intrinsic flat distance provides a notion of convergence for sequence of Riemannian manifolds which is weaker than Gromov-Hausdorff distance. In fact, there are examples where the Gromov-Hausdorff limit does not even exist but for which the intrinsic flat limit is a Riemannian manifold (See Example A.7 of \cite{SW-JDG}, Example 2.8 of \cite{Allen-Perales-Sormani}, Example 3.8 of \cite{Allen-Sormani-2}).

For two Riemannian $n$-manifolds $M,N$ we denote the intrinsic flat distance by $d_{\mathcal{F}}(M,N)$ and we say that $M_j$ converges to $N$ in the intrinsic flat sense, denoted $M_j \Fto N$ if $d_{\mathcal{F}}(M_j,N)\rightarrow 0$. We say that $M_j$ converges to $N$ in the volume preserving intrinsic flat sense, denoted $M_j \VFto N$ if $d_{\mathcal{F}}(M_j,N)\rightarrow 0$ and $\Vol(M_j) \rightarrow \Vol(N)$. Instead of giving the  definition of intrinsic flat convergence, which would require many details about integral current spaces, we provide a recent estimate of intrinsic flat convergence for Riemannian manifolds satisfying special properties which gives the geometric intuition behind the intrinsic flat convergence that is relevant to this paper. For the definition of Sormani-Wenger intrinisic flat convergence see \cite{SW-JDG} and for an inutitive survey of various notions of convergence see \cite{Sormani-scalar}.

\begin{thm}[B.A., R. Perales, C. Sormani \cite{Allen-Perales-Sormani}]
Let $M$ be a compact, connected, oriented manifold without boundary, $M_j=(M,g_j)$, $M_0=(M,g_0)$ Riemanian manifolds with $\Diam(M_j) \le D$, $\Vol(M_j) \le V$, and
\begin{align}
g_j(v,v) \ge g_0(v,v), \quad \forall v \in T_pM, p\in M.
\end{align}
Let $W_j \subset M$ be a measurable set and assume $\exists \delta_j > 0$ such that if $d_j,d_0$ are the distance function for $M_j,M_0$ then
\begin{align}
d_{g_j}(p,q) \le d_{g_0}(p,q) + 2 \delta_j, \quad \forall p,q \in W_j.
\end{align}
If
\begin{align}
\Vol_{g_j}(M \setminus W_j) \le V_j, \quad h_j \ge \sqrt{\delta_j D + \delta_j^2},
\end{align}
then 
\begin{align}
d_{\mathcal{F}}(M_j,M_0) \le 2 V_j + h_j V.
\end{align}
\end{thm}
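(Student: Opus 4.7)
The approach is the Lakzian--Sormani ``hemispherical embedding'' technique: build a single metric space $Z$ into which both $M_j$ and $M_0$ embed in a distance-preserving way on the good set $W_j$, then estimate the intrinsic flat distance in $Z$ by the mass of an obvious slab filling plus the mass of the excess boundary lying over $M\setminus W_j$.

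First, from the pointwise domination $g_j\ge g_0$ one gets $d_{g_0}(p,q)\le d_{g_j}(p,q)$ on all of $M$, since the $g_0$-length of any curve is at most its $g_j$-length. Combined with the hypothesis on $W_j$ this yields the two-sided control $0\le d_{g_j}(p,q)-d_{g_0}(p,q)\le 2\delta_j$ for $p,q\in W_j$. Next, take $Z=M\times[0,h_j]$ equipped with a warped-type metric whose bottom slice is isometric to $M_0$, whose top slice is isometric to $M_j$, and for which crossing the full height $h_j$ costs exactly $h_j$; write $\varphi_0(p)=(p,0)$ and $\varphi_j(p)=(p,h_j)$. By construction both slice embeddings are $1$-Lipschitz, and the only non-trivial competitors for the geodesic between $\varphi_j(p)$ and $\varphi_j(q)$ are ``U-shaped'' detours through the bottom slice, of length at least $\sqrt{d_{g_0}(p,q)^2+(2h_j)^2}$. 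For no such detour to be a shortcut we need
\begin{align}
d_{g_0}(p,q)^2+4h_j^2 \;\ge\; \bigl(d_{g_0}(p,q)+2\delta_j\bigr)^2,
\end{align}
which, using the diameter bound $d_{g_0}(p,q)\le D$, simplifies precisely to $h_j^2\ge \delta_j D+\delta_j^2$, the hypothesized lower bound on $h_j$. Hence $\varphi_0$ and $\varphi_j$ are both isometric embeddings on $W_j$.

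The intrinsic flat distance is now bounded by the flat distance in $Z$ of the current $\varphi_{j\,\#}[M_j]-\varphi_{0\,\#}[M_0]$, which we decompose into a ``good'' part over $W_j$ and a ``bad'' part over $M\setminus W_j$. The good part is filled by the slab current $[W_j\times[0,h_j]]$, whose mass is at most $h_j V$ since each horizontal slice has volume bounded by $V$ and the vertical extent is $h_j$. The bad part is not filled: its mass is estimated directly as $\Vol_{g_j}(M\setminus W_j)+\Vol_{g_0}(M\setminus W_j)\le 2V_j$, where the second inequality uses $g_j\ge g_0$. Adding these two contributions gives the asserted bound $d_{\mathcal F}(M_j,M_0)\le 2V_j+h_j V$.

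The main obstacle is the design of the metric on $Z$: two different horizontal metrics must coexist on the two boundary slices while keeping both slice embeddings isometric on $W_j$. This calibration is exactly why the hypothesis takes the asymmetric form $h_j\ge\sqrt{\delta_j D+\delta_j^2}$ --- enough vertical room must be allowed in $Z$ so that a horizontal ``saving'' of at most $2\delta_j$ obtained by dropping to the bottom cannot be realized by any slanted path, and the Pythagorean bookkeeping above pins down the precise trade-off between $\delta_j$, $D$, and $h_j$.
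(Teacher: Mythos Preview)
This theorem is quoted from \cite{APS} and is not proved in the present paper --- it appears only as background to give geometric intuition for intrinsic flat convergence --- so there is no in-paper argument to compare against. I assess your proposal on its own merits.

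The overall plan (a Lakzian--Sormani style bridge embedding) is the right template, and the Pythagorean inequality that isolates the threshold $h_j^2\ge\delta_jD+\delta_j^2$ is the correct mechanism. Two genuine gaps remain, however.

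First, the ambient space $Z$ is not well defined as you describe it. There is no smooth ``warped-type'' metric on $M\times[0,h_j]$ whose slices at $t=0$ and $t=h_j$ are isometric to the distinct metrics $g_0$ and $g_j$; the actual construction is a metric-space gluing of the full manifolds $M_j$ and $M_0$ along a bridge, after which one must verify that the inclusions are distance-preserving for \emph{all} pairs $p,q$ in $M_j$ (respectively $M_0$), not just for $p,q\in W_j$. Your verification covers only pairs in $W_j$, yet the definition of $d_{\mathcal F}$ requires global isometric embeddings.

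Second, and more seriously, taking the filling to be the slab $[W_j\times[0,h_j]]$ produces a lateral boundary component supported on $\partial W_j\times[0,h_j]$, and you have dropped its mass from the final estimate. (Indeed $W_j$ is only assumed measurable, so this slab need not even be an integral current.) A lateral term of exactly this kind is what appears in the original Lakzian--Sormani estimate, and the whole point of the \cite{APS} refinement is to eliminate it; doing so requires a different choice of filling current, and this is where the one-sided hypothesis $g_j\ge g_0$ does real work beyond the trivial consequence $d_{g_0}\le d_{g_j}$ that you used.
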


This theorem shows us that if the distance on $M_j$ is always longer than on $M_0$ and one can also control $d_j$ from above by $d_0$ on a set of large volume then one can estimate the intrinsic flat distance between Riemannian manifolds. In fact, if the volume of $M_j \setminus W_j \rightarrow 0$ and $h_j \rightarrow 0$ then one can show that the intrinisc flat distance converges to zero. The intuition being that $M_j$ is allowed to measure distances larger than $M_0$ as long as that only happens on a set of small measure. This should be contrasted with Gromov-Hausdorff or $C^{k,\alpha}$ convergence of Riemannian manifolds where this is not allowed.

\subsection{Contrasting and Relating Notions of Convergence}\label{subsect:BackgroundRelatingContrasting}

In \cite{Allen-Perales-Sormani}, the author, Perales, and Sormani give a theorem which provides hypothesis on a sequence of Riemannian manifolds which can be obtained through natural geometric analysis estimates. When these hypotheses are obtained the theorem guarantees volume preserving intrinsic flat convergence. Here we state Corollary 5.1 of \cite{Allen-Perales-Sormani} which is the version of the main theorem which we will aim to use in this paper.

\begin{thm}[B.A., R. Perales, C. Sormani \cite{Allen-Perales-Sormani}]\label{VolSWIFConv}
Let $M_j=(M^n,g_j)$, $M_0=(M^n,g_0)$ be compact, connected, oriented smooth Riemannian $n$-manifolds without boundary. If 
\begin{align}\label{MetricLowerHyp}
\left(1 - \frac{C}{j}\right ) g_0(v,v) \le g_j(v,v) \qquad \forall v \in T_pM, p \in M,
\end{align}
\begin{align}
\Diam(M_j) \le D_0,
\end{align}
and 
\begin{align}
\Vol(M_j) \rightarrow \Vol(M_0)\label{VolHyp}
\end{align}
then
\begin{align}
M_j &\VFto  M_0.
\end{align}
\end{thm}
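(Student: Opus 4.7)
The plan is to reduce Theorem~\ref{VolSWIFConv} to the distance-based intrinsic flat estimate stated at the start of Section~\ref{subsect:BackgroundSWIF}. To invoke that estimate I need to produce a measurable set $W_j \subset M$ whose complement has $g_j$-volume tending to zero and on which $d_{g_j}(p,q) \le d_{g_0}(p,q) + 2\delta_j$ with $\delta_j \to 0$, together with the pointwise lower bound $g_j \ge g_0$. The small factor $(1 - C/j)$ in hypothesis \eqref{MetricLowerHyp} is harmless: setting $\tilde g_0^{(j)} := (1 - C/j) g_0$ gives $g_j \ge \tilde g_0^{(j)}$ pointwise, and $\tilde g_0^{(j)} \to g_0$ smoothly on the fixed compact $M$, so by the triangle inequality it suffices to estimate $d_{\mathcal{F}}\bigl(M_j,(M, \tilde g_0^{(j)})\bigr)$.

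The key observation, and the main technical step, is that the one-sided pointwise bound combined with volume convergence forces $g_j$ to converge to $g_0$ in measure. Indeed, $g_j \ge (1 - C/j) g_0$ implies $\sqrt{\det g_j} \ge (1 - C/j)^{n/2} \sqrt{\det g_0}$ pointwise, while $\Vol(M_j) \to \Vol(M_0)$ gives $\int_M \sqrt{\det g_j}\, dx \to \int_M \sqrt{\det g_0}\, dx$. Hence $\sqrt{\det g_j} \to \sqrt{\det g_0}$ in $L^1(M, dx)$, and by Chebyshev the set
\begin{equation*}
W_j := \bigl\{\, p \in M \,:\, \det g_j(p) \le (1 + \epsilon_j)\, \det g_0(p)\,\bigr\}
\end{equation*}
satisfies $\Vol_{g_0}(M \setminus W_j) \to 0$ for a suitable sequence $\epsilon_j \to 0$. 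A short linear-algebra lemma (simultaneously diagonalising $g_0^{-1} g_j \ge I$ and using $\det \le 1 + \epsilon_j$ to force all eigenvalues to $1$) upgrades this to the quadratic-form inequality $g_j \le (1 + C' \epsilon_j)\, g_0$ on $W_j$, which combined with the diameter bound also makes $\Vol_{g_j}(M \setminus W_j) \to 0$.

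The remaining task is the distance bound on $W_j$, and this is where the main obstacle lies. For $p, q \in W_j$, a $g_0$-minimising geodesic $\gamma$ has $g_j$-length at most $\sqrt{1 + C'\epsilon_j}\, L_{g_0}(\gamma)$ \emph{provided} $\gamma$ stays inside $W_j$, but of course $\gamma$ may cross the bad set $M \setminus W_j$. The fix, which is the technical heart of the argument in \cite{APS}, is a path-deformation step: since $\Vol_{g_0}(M \setminus W_j)$ is small and $g_0$ is a fixed smooth metric, a Fubini/pigeonhole argument on tubular neighbourhoods of $\gamma$ produces a competitor path avoiding the bad set at an additive distance cost $\delta_j \to 0$. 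Feeding the resulting estimate $d_{g_j}(p,q) \le d_{g_0}(p,q) + 2\delta_j$ and the choice $h_j := \sqrt{\delta_j D_0 + \delta_j^2}$ into the estimate theorem of Section~\ref{subsect:BackgroundSWIF} yields
\begin{equation*}
d_{\mathcal{F}}\bigl(M_j,\, (M, \tilde g_0^{(j)})\bigr) \;\le\; 2\,\Vol_{g_j}(M \setminus W_j) + h_j\, V \;\longrightarrow\; 0,
\end{equation*}
so that $M_j \Fto M_0$; the hypothesis $\Vol(M_j) \to \Vol(M_0)$ then upgrades this to the volume-preserving conclusion $M_j \VFto M_0$.
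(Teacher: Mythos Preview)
This theorem is not proved in the present paper; it is quoted from \cite{APS} (as Corollary~5.1 there) and invoked as a black box in Section~\ref{sect:ConfFlatCase}. There is no in-paper argument to compare your proposal against.

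On the substance of your sketch: the reduction via $\tilde g_0^{(j)}=(1-C/j)g_0$, the $L^1$ convergence of the volume densities from \eqref{MetricLowerHyp} and \eqref{VolHyp}, and the linear-algebra step (eigenvalues of $g_0^{-1}g_j$ at least $1$ with product at most $1+\epsilon_j$ forces each eigenvalue at most $1+\epsilon_j$) are all correct. The genuine gap is the path step. A bad set of arbitrarily small $g_0$-measure can still disconnect $M$---take a thin tubular neighbourhood of a separating hypersurface---so there need not exist any competitor path literally \emph{avoiding} $M\setminus W_j$; and on that set you have no upper bound on $g_j$, so even a short crossing can contribute unbounded $g_j$-length. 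What a Fubini/Liouville-measure argument over the unit tangent bundle actually delivers is a nearby $g_0$-geodesic along which the \emph{integrated} excess $\int\bigl(\sqrt{g_j(\dot\gamma,\dot\gamma)}-1\bigr)\,dt$ is small, using that when all eigenvalues $\lambda_i\ge 1$ one has $\sum_i(\lambda_i-1)\le\prod_i\lambda_i-1$, so $L^1$ control of the determinant yields $L^1$ control of the trace. That integrated bound, not set avoidance, is what produces $d_{g_j}\le d_{g_0}+2\delta_j$ on a large set $W_j$ and feeds into the estimate theorem of Section~\ref{subsect:BackgroundSWIF}. If you rewrite the path step in those terms the outline is sound; as stated, the phrase ``avoiding the bad set'' is the one claim that does not go through.
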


Our goal in Section \ref{sect:ConfFlatCase} is to obtain the estimates necessary to apply this theorem. In \cite{Allen-Sormani,Allen-Sormani-2} the author and Sormani give warped product and conformal examples which illustrate the necessity of the hypotheses of this theorem. In particular, Example 3.4 of \cite{Allen-Sormani} and Example 3.1 of \cite{Allen-Sormani-2} show that \eqref{MetricLowerHyp} of Theorem \ref{VolSWIFConv} is essential. If one removes \eqref{MetricLowerHyp} then one can construct examples where short cuts are formed along sets of small measure so that the sequence does not converge to a Riemannian manifold. In Example 3.2 of \cite{Allen-Sormani-2} we see that one cannot expect a stronger notion of convergence under these hypotheses and Example 3.3 shows that pointwise convergence on a dense set is not a sufficient replacement for volume convergence. In Examples 3.4-3.8 of \cite{Allen-Sormani-2} the importance of volume convergence and its relationship to $L^p$ convergence for $p \ge n$ is shown for sequences conformal to flat tori. In particular, Example 3.5 shows how one should expect bubbling to occur in the conformal case under similar conditions as Conjecture \ref{MainConjecture} without the scalar curvature assumption. For the conformal case we will show the convergence in $L^n$ norm of the conformal factor $e^{f_j}$ which implies volume convergence when combined with \eqref{MetricLowerHyp} and Lemma 4.3 of \cite{Allen-Sormani-2}.

\subsection{Scalar Curvature of Conformal Metrics}\label{subsect:BackgroundScalar}
For the metric $g_j = e^{2f_j} g_0$ on $M^n$ we find the following equation for the scalar curvature:
\begin{align}
R_{g_j} =e^{-2f_j} \lp R_{g_0} -2(n-1)\Delta^{g_0} f_j -(n-2)(n-1) |\nabla^{g_0} f_j|^2\rp.\label{ScalarFormula}
\end{align}
See Lee and Parker \cite{LeePar87} for a discussion of scalar curvature formulas under conformal changes where the reader should be careful that their definition of Laplacian is the negative of ours.

\subsection{Scalar Curvature of Sequences Conformal to Flat Tori}\label{subsect:ConfFlatCase}

In this subsection we study sequences which satisfy Conjecture \ref{MainConjecture} and which are conformal to a flat torus.  We begin by deriving a family of elliptic inequalities which follow from \eqref{ScalarFormula}.
\begin{lem}\label{SobolevConvergence1}
Let $M_j$ be a sequence of Riemannian $n-$manifolds, $n \ge 3$ conformal to a flat torus $M_{0,j} = (\mathbb{T}^n,g_{0,j})$ 
 then we find that 
\begin{align}
 \frac{2}{\alpha} \Delta^{g_{0,j}} e^{\alpha f_j} +(n-2 - 2 \alpha) |\nabla^{g_{0,j}} f_j|^2 e^{\alpha f_j} &\le \frac{e^{(2+\alpha)f_j}}{(n-1)j}\label{generalPDESatisfiedf_j}
\end{align}
for $\alpha \in \R$ and hence
\begin{align}
\int_{\mathbb{T}^n}|\nabla^{g_{0,j}} f_j|^2 dV_{g_{0,j}} &\le \frac{V_0^{\frac{2}{n}}\Vol(M_{0,j})^{\frac{n-2}{n}}}{(n-1)j}, \label{FirstSobolevEstimate}
\\\int_{\mathbb{T}^n} |\nabla^{g_{0,j}} e^{-f_j}|^2  dV_{g_{0,j}} &\le \frac{Vol(M_{0,j})}{j(n-1)(n+2)},\label{SecondSobolevEstimate}
\\ \int_{\mathbb{T}^n}|\nabla^{g_{0,j}} e^{\frac{\alpha}{2}f_j}|^2dV_{g_{0,j}} &\le \frac{\alpha^2V_0^{\frac{2+\alpha}{n}} \Vol(M_{0,j})^{\frac{n-2-\alpha}{n}}}{4(n-2-2\alpha)(n-1)j},\label{ThirdSobolevEstimate}
\end{align}
for $\alpha \in \lp 0,\frac{n-2}{2}\rp$.
\end{lem}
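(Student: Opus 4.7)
The plan is to derive all the stated bounds from one master inequality. Since $g_{0,j}$ is flat, $R_{g_{0,j}}=0$, and the hypothesis $R_{g_j}\ge -1/j$ plugged into the conformal scalar curvature formula \eqref{ScalarFormula} immediately produces
\begin{equation*}
2\Delta^{g_{0,j}} f_j + (n-2)|\nabla^{g_{0,j}} f_j|^2 \le \frac{e^{2f_j}}{(n-1)j}.
\end{equation*}
To reach the family \eqref{generalPDESatisfiedf_j}, I would multiply through by $e^{\alpha f_j}$ and apply the chain-rule identity $\Delta^{g_{0,j}} e^{\alpha f_j}=\alpha e^{\alpha f_j}\Delta^{g_{0,j}} f_j+\alpha^{2} e^{\alpha f_j}|\nabla^{g_{0,j}} f_j|^{2}$ to replace $e^{\alpha f_j}\Delta^{g_{0,j}} f_j$ by $\tfrac{1}{\alpha}\Delta^{g_{0,j}} e^{\alpha f_j}-\alpha e^{\alpha f_j}|\nabla^{g_{0,j}} f_j|^{2}$. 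Collecting the two gradient terms then yields the coefficient $n-2-2\alpha$.

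Next I would integrate each inequality in the family over $\mathbb{T}^{n}$ against $dV_{g_{0,j}}$. Because $\mathbb{T}^{n}$ is closed, the divergence theorem kills the Laplacian term. To control the right-hand side, I would apply Hölder's inequality with exponents $n/(2+\alpha)$ and $n/(n-2-\alpha)$, combined with $dV_{g_j}=e^{nf_j}\,dV_{g_{0,j}}$ and the hypothesis $\Vol(M_j)\le V_0$, which gives
\begin{equation*}
\int_{\mathbb{T}^{n}} e^{(2+\alpha)f_j}\,dV_{g_{0,j}}\le V_0^{(2+\alpha)/n}\,\Vol(M_{0,j})^{(n-2-\alpha)/n}.
\end{equation*}

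The three named estimates then correspond to different choices of $\alpha$. Estimate \eqref{FirstSobolevEstimate} is the $\alpha=0$ case, equivalent to integrating the base inequality directly. Estimate \eqref{ThirdSobolevEstimate} uses $\alpha\in(0,(n-2)/2)$ together with the identity $|\nabla^{g_{0,j}} e^{(\alpha/2)f_j}|^{2}=(\alpha^{2}/4)\,e^{\alpha f_j}|\nabla^{g_{0,j}} f_j|^{2}$ to rewrite the integrated gradient term in the stated form; the open interval for $\alpha$ is exactly what keeps the coefficient $n-2-2\alpha$ strictly positive so that one can divide. The most delicate choice is $\alpha=-2$ for \eqref{SecondSobolevEstimate}: it makes $e^{(2+\alpha)f_j}\equiv 1$, eliminating the Hölder step and leaving $\Vol(M_{0,j})/((n-1)j)$ on the right, while simultaneously $e^{\alpha f_j}|\nabla^{g_{0,j}} f_j|^{2}=|\nabla^{g_{0,j}} e^{-f_j}|^{2}$, so after integration one recovers $(n+2)\int|\nabla^{g_{0,j}} e^{-f_j}|^{2}\,dV_{g_{0,j}}\le \Vol(M_{0,j})/((n-1)j)$.

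The entire argument is essentially a single calculation parameterized by $\alpha$, so I anticipate no substantive obstacle beyond careful bookkeeping. The only genuine piece of insight is recognizing $\alpha=-2$ as the fortunate exponent at which both sides of the family collapse into their cleanest possible form; this is why \eqref{SecondSobolevEstimate} appears with a sharper constant and outside the range $\alpha\in(0,(n-2)/2)$ that is dictated by the positivity requirement needed for \eqref{ThirdSobolevEstimate}.
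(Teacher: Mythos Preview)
Your proposal is correct and follows essentially the same route as the paper's own proof: derive the base inequality from \eqref{ScalarFormula} with $R_{g_{0,j}}=0$, pass to the family \eqref{generalPDESatisfiedf_j} via the chain-rule identity for $\Delta^{g_{0,j}} e^{\alpha f_j}$, integrate over the closed torus, and bound the right-hand side by H\"older against $\Vol(M_j)\le V_0$. The only cosmetic difference is that you organize everything as specializations of the single $\alpha$-family (with $\alpha=0$, $\alpha=-2$, and $\alpha\in(0,(n-2)/2)$), whereas the paper treats the $\alpha=0$ and $\alpha=-2$ cases by direct computation before writing down the general identity \eqref{AlphaLaplacianIdentity}; the underlying calculations are identical.
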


\begin{proof}
\begin{align}\label{PDE}
R_{g_j} &=-(n-1)e^{-2f_j} \lp 2\Delta^{g_{0,j}} f_j + (n-2)|\nabla^{g_{0,j}} f_j|^2\rp \ge -\frac{1}{j}  \hspace{0.5cm}
\\&\qquad \Rightarrow \hspace{0.5cm}2\Delta^{g_{0,j}} f_j + (n-2)|\nabla^{g_{0,j}} f_j|^2 \le \frac{e^{2f_j}}{(n-1)j}
\end{align}
and now by integrating this PDE we find that
\begin{align}
\int_{T^n}|\nabla^{g_{0,j}} f_j|^2 dV_{g_{0,j}} \le \frac{1}{(n-1)j} \int_{T^n} e^{2f_j} d V_{g_{0,j}}.
\end{align}

By H\"{o}lder's inequality we find for $n \ge 2$
\begin{align}
\int_{\Tor^n} e^{2f_j} d V_{g_{0,j}} &\le \left(\int_{\Tor^n} e^{nf_j} d V_{g_{0,j}}\right)^{\frac{2}{n}} \Vol(M_{0,j})^{\frac{n-2}{n}}
\\&\le \Vol(M_j)^{\frac{2}{n}}\Vol(M_{0,j})^{\frac{n-2}{n}} \le V_0^{\frac{2}{n}}\Vol(M_{0,j})^{\frac{n-2}{n}},
\end{align}
which implies \eqref{FirstSobolevEstimate}.

Now we also notice
\begin{align}\label{ProductRuleExponential}
-(n-1)\Delta^{g_{0,j}} e^{-2f_j} = 2(n-1)\Delta^{g_{0,j}} f_j e^{-2f_j} -4(n-1) |\nabla^{g_{0,j}} f_j|^2 e^{-2f_j}
\end{align}
which implies
\begin{align}
-(n-1)\Delta^{g_{0,j}} e^{-2f_j} + (n-1)(n+2) |\nabla^{g_{0,j}} e^{-f_j}|^2  \le \frac{1}{j}.
\end{align}\label{GoToZeroExponential}
By integrating we immediately find that 
\begin{align}
\int_{\mathbb{T}^n} |\nabla^{g_{0,j}} e^{-f_j}|^2  dV_{g_{0,j}} \le \frac{\Vol(M_{0,j})}{j(n-1)(n+2)}.
\end{align}
Similarly we can find
\begin{align}
\frac{2}{\alpha} \Delta^{g_{0,j}} e^{\alpha f_j} -2 \alpha |\nabla^{g_{0,j}} f_j|^2 e^{\alpha f_j} = 2 \Delta^{g_{0,j}} f_j e^{\alpha f_j}\label{AlphaLaplacianIdentity}
\end{align}
and so by combining with \eqref{PDE} we find
\begin{align}
\frac{2}{\alpha} \Delta^{g_{0,j}} e^{\alpha f_j} +(n - 2-2 \alpha) |\nabla^{g_{0,j}} f_j|^2 e^{\alpha f_j} \le \frac{e^{(2+\alpha) f_j}}{(n-1)j}.\label{GenPDEInequality}
\end{align}
We can rewrite \eqref{GenPDEInequality} as
\begin{align}
\frac{2}{\alpha} \Delta^{g_{0,j}} e^{\alpha f_j} +\frac{4(n - 2-2 \alpha)}{\alpha^2} |\nabla^{g_{0,j}} e^{\frac{\alpha f_j}{2}}|^2  \le \frac{e^{(2+\alpha) f_j}}{(n-1)j},
\end{align}
and hence by integrating we find
\begin{align}
\int_{\mathbb{T}^n}|\nabla^{g_{0,j}} e^{\frac{\alpha}{2}f_j}|^2dV_{g_{0,j}} &\le \frac{\alpha^2}{4(n-2-2\alpha)(n-1)j}\int_{\Tor^n}e^{(2+\alpha)f_j}dV_{g_{0,j}}
\end{align}
 if $\alpha \in \lp 0,\frac{n-2}{2}\rp$. By this choice of $\alpha$ we find for $n \ge 2$
 \begin{align}
 2+\alpha < \frac{n+2}{2} \le n,
 \end{align}
 and hence by H\"{o}lder's inequality 
 \begin{align}
 \int_{\Tor^n}e^{(2+\alpha)f_j}dV_{g_{0,j}} &\le\left( \int_{\Tor^n}e^{nf_j}dV_{g_{0,j}}\right)^{\frac{2+\alpha}{n}} \Vol(M_{0,j})^{\frac{n-2-\alpha}{n}}
 \\ &\le \Vol(M_j)^{\frac{2+\alpha}{n}} \Vol(M_{0,j})^{\frac{n-2-\alpha}{n}} \le V_0^{\frac{2+\alpha}{n}} \Vol(M_{0,j})^{\frac{n-2-\alpha}{n}},
 \end{align}
 which implies \eqref{ThirdSobolevEstimate}.
\end{proof}
\section{Reduction To The Conformal Case}\label{sect:Reduction}

In this section we consider $M_j^n=(\Tor^n,g_j)$ which is diffeomorphic to a torus and satisfies the hypotheses of Conjecture \ref{MainConjecture}. Our goal is to relate this conjecture to a sequence of metrics conformal to background metrics with constant scalar curvature. To this end we remember the statement of the Yamabe problem (See the paper by Lee and Parker for an extensive discussion of the Yamabe problem \cite{LeePar87}).

\begin{thm}[Yamabe \cite{Yamabe60}, Trudinger \cite{Trudinger68}, Aubin \cite{Aubin76}, Schoen \cite{Schoen84}]\textbf{The Yamabe Problem Theorem:} \label{Yamabe Problem}
If $M^n$ is a compact manifold of dimension $n \ge 3$ with Riemannian metric $g$ then there exists a constant scalar curvature metric $g_0$ and a function $f$ such that $g = e^{2f} g_0$. In other words, $g$ is conformal to a Riemannian manifold with constant scalar curvature.
\end{thm}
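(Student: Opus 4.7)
The plan is to reformulate the problem as a variational question on the conformal class of $g$. Write the sought constant scalar curvature metric as $g_0 = v^{4/(n-2)} g$ for some positive function $v$; the transformation law for the scalar curvature under conformal change gives
\[ R_{g_0} = v^{-(n+2)/(n-2)} \left( -\frac{4(n-1)}{n-2} \Delta^g v + R_g v \right), \]
so $R_{g_0} \equiv \lambda$ becomes the Yamabe PDE
\[ -\frac{4(n-1)}{n-2} \Delta^g v + R_g v = \lambda v^{(n+2)/(n-2)}, \qquad v > 0. \]
This is the Euler-Lagrange equation for the Yamabe functional
\[ Q(v) = \frac{\int_M \left( \frac{4(n-1)}{n-2} |\nabla^g v|^2 + R_g v^2 \right) dV_g}{\left( \int_M v^{2n/(n-2)} dV_g \right)^{(n-2)/n}}, \]
whose infimum over positive smooth functions defines the conformal Yamabe invariant $\lambda([g])$. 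Once a positive minimizer $v$ is produced, setting $f = -\tfrac{2}{n-2} \log v$ yields the representation $g = e^{2f} g_0$ with $g_0$ of constant scalar curvature.

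The essential difficulty is that the exponent $p = 2n/(n-2)$ is precisely the critical Sobolev exponent for the embedding $W^{1,2}(M) \hookrightarrow L^p(M)$, so this embedding is bounded but not compact and the direct method of the calculus of variations does not immediately produce a minimizer. First I would carry out subcritical approximation: for each $q \in (2,p)$ the analogous functional with exponent $q$ in the denominator satisfies the Palais-Smale condition, and standard variational arguments yield positive smooth minimizers $v_q$ (suitably normalized) of energy $\lambda_q \to \lambda([g])$ as $q \nearrow p$. The task then reduces to proving that the family $\{v_q\}$ has a positive limit and does not concentrate into a bubble.

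The decisive ingredient is Aubin's dichotomy: if $\lambda([g]) < \lambda(\Sp^n)$, where $\lambda(\Sp^n)$ is the Yamabe invariant of the round sphere, then concentration is impossible and the infimum is attained. One easily verifies the universal bound $\lambda([g]) \le \lambda(\Sp^n)$ by inserting test functions concentrated on a small geodesic ball and comparing with the sharp Sobolev inequality on $\R^n$. The hard part is proving the strict inequality $\lambda([g]) < \lambda(\Sp^n)$ whenever $(M, g)$ is not conformally equivalent to the round sphere, and this was done in two historically separate stages. Aubin handled the case $n \ge 6$ with $(M,g)$ not locally conformally flat by expanding $Q$ on test functions concentrated at a point where the Weyl tensor does not vanish; the $|W|^2$ correction in the expansion provides a strictly negative leading term. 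Schoen resolved the remaining cases ($n = 3, 4, 5$, together with the locally conformally flat case in all dimensions) by using test functions built from the positive Green's function of the conformal Laplacian; the asymptotic expansion of $Q$ then reduces to the sign of an ADM-type mass term, whose positivity (unless $M$ is conformally $\Sp^n$) is furnished precisely by the positive mass theorem.

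Having obtained a nonnegative $W^{1,2}$ minimizer $v$, elliptic regularity (Moser iteration to bound $v$ in $L^\infty$, followed by Schauder theory for the critical semilinear PDE) promotes $v$ to $C^\infty$, and the strong maximum principle together with $v \not\equiv 0$ forces $v > 0$. Assembling these steps yields a smooth positive conformal factor, and hence the metric $g_0$ of constant scalar curvature and the function $f$ asserted in the statement.
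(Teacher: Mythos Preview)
Your outline is a faithful sketch of the classical resolution of the Yamabe problem: the subcritical approximation, Aubin's strict-inequality criterion $\lambda([g]) < \lambda(\Sp^n)$, the Weyl-tensor test functions for $n \ge 6$ non-locally-conformally-flat, and Schoen's Green's-function test functions tied to the positive mass theorem for the remaining cases. As a high-level roadmap there is nothing wrong with it.

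However, there is no proof in the paper to compare it to. The theorem is stated as a cited black box (Yamabe, Trudinger, Aubin, Schoen, with a pointer to the Lee--Parker survey) and immediately applied to deduce that each $g_j$ is conformal to a constant-scalar-curvature metric; the paper never attempts to reprove it. So your proposal is not an alternative to the paper's argument---it is simply the standard proof of a result the paper takes for granted.
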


This implies that $g_j$ is conformal to a metric with constant scalar curvature on the torus, often called Yamabe metrics. Note that if $g_j = e^{2f_j}g_0$ and $R_{g_0} = C\not = 0$ then one can define $\tilde{g}_0 = |C|g_0$ so that $R_{\tilde{g}_0} = \frac{R_{g_0}}{|C|}=\frac{C}{|C|}$. Hence we can assume without loss of generality that $R_{g_0} = -1,0,1$. 

Now we remember the scalar torus rigidity theorem.
\begin{thm}[Schoen and Yau \cite{SY79}, Gromov and Lawson \cite{GL80}]\label{ScalarTorusRigidity}
If $M^n$ is a Riemannian $n-$manifold, $n\ge 2$ which is diffeomorphic to the torus with non-negative scalar curvature then $M$ is isometric to the flat torus.
\end{thm}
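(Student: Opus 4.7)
The plan is to prove this by induction on dimension using the Schoen--Yau minimal hypersurface method. The base case $n=2$ is immediate from Gauss--Bonnet: $\int_{\mathbb{T}^2} R\,dA = 4\pi\chi(\mathbb{T}^2)=0$, so combined with $R\ge 0$ pointwise we get $R\equiv 0$, hence $M$ is flat.

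For the inductive step $n\ge 3$, fix a non-trivial class $\alpha\in H_{n-1}(\mathbb{T}^n,\mathbb{Z})\cong \mathbb{Z}^n$. Geometric measure theory produces an area-minimizing integral current in $\alpha$, which is a smooth embedded stable minimal hypersurface $\Sigma$ away from a codimension $7$ singular set (empty for $n\le 7$; the cases $n\ge 8$ require the more delicate regularization argument of Schoen--Yau). Combining the Gauss equation $R_\Sigma = R_M + 2\,\mathrm{Ric}_M(\nu,\nu)+|A|^2$ (using $H=0$) with the stability inequality
\[ \int_\Sigma \bigl(|A|^2+\mathrm{Ric}_M(\nu,\nu)\bigr)\phi^2\,dV_\Sigma \le \int_\Sigma|\nabla\phi|^2\,dV_\Sigma \quad \forall\, \phi\in C^\infty(\Sigma), \]
and using $R_M\ge 0$, one deduces $\tfrac{1}{2}\int_\Sigma R_\Sigma\phi^2 \le \int_\Sigma |\nabla\phi|^2$. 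In other words, the conformal Laplacian on $\Sigma$ has non-negative first eigenvalue; taking $u>0$ to be a first eigenfunction and setting $\tilde g_\Sigma = u^{4/(n-3)}g_\Sigma$ yields a metric on $\Sigma$ with $R_{\tilde g_\Sigma}\ge 0$ by the standard conformal scalar curvature formula.

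The step I expect to be the main obstacle is the topological reduction: to apply the inductive hypothesis one needs $\Sigma$ to be diffeomorphic to $\mathbb{T}^{n-1}$ (or at least belong to a class for which the torus-type rigidity is already known). This is the technical heart of the Schoen--Yau approach and is handled by iterating the slicing against the remaining independent classes in $H_{n-1}$ and tracking how homological non-triviality propagates to the factors, or, more cleanly, by invoking the Gromov--Lawson enlargeability obstruction to rule out non-torus reductions. Granting this step, the inductive hypothesis forces $(\Sigma,\tilde g_\Sigma)$ flat. Running the stability and Gauss equalities backward then gives $|A|\equiv 0$, $u$ constant, $\mathrm{Ric}_M(\nu,\nu)\equiv 0$, and $R_M\equiv 0$, so $\Sigma$ is a totally geodesic flat $(n-1)$-torus in $M$. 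Sliding $\Sigma$ along the corresponding Jacobi field from the volume variation produces a foliation of $M$ by parallel flat totally geodesic tori, forcing $M$ itself to be flat. An alternative route that sidesteps the topological induction entirely is the Gromov--Lawson Dirac operator proof: the Lichnerowicz formula on the spin torus combined with the non-triviality of the twisted $\hat A$-index forces $R_M\equiv 0$, after which a Bochner argument on parallel spinors gives Ricci-flatness and hence flatness of $M$.
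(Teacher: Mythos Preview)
The paper does not supply its own proof of this theorem: it is stated with attribution to Schoen--Yau and Gromov--Lawson and then immediately applied to rule out $R_{g_0}>0$ in the Yamabe reduction. So there is nothing in the paper to compare your argument against; you have sketched the classical literature proofs, which is more than the paper attempts.

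That said, a couple of remarks on your sketch. First, there is a sign slip in the Gauss/stability combination. With $H=0$ the Gauss equation reads $R_\Sigma = R_M - 2\,\mathrm{Ric}_M(\nu,\nu) - |A|^2$, and substituting $\mathrm{Ric}_M(\nu,\nu)$ into the stability inequality yields
\[
\int_\Sigma |\nabla\phi|^2 + \tfrac{1}{2}\int_\Sigma R_\Sigma\,\phi^2 \;\ge\; \tfrac{1}{2}\int_\Sigma \bigl(R_M + |A|^2\bigr)\phi^2 \;\ge\; 0,
\]
i.e.\ the operator $-\Delta_\Sigma + \tfrac{1}{2}R_\Sigma$ is non-negative, not the inequality you wrote. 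This is what one then compares to the conformal Laplacian (whose zero-order coefficient $\tfrac{n-3}{4(n-2)}$ is at most $\tfrac{1}{2}$) to produce the conformal metric of non-negative scalar curvature on $\Sigma$. Second, you are right that the topological step is the genuine difficulty: an area-minimizer in a class of $H_{n-1}(\mathbb{T}^n)$ need not be a torus, and the original Schoen--Yau papers establish the \emph{non-existence} of positive scalar curvature via an iterated descent rather than a clean induction on ``$\Sigma$ is $\mathbb{T}^{n-1}$''; the rigidity statement ($R\ge 0 \Rightarrow$ flat) then requires a separate argument. Your Gromov--Lawson alternative is the cleaner route to the full rigidity and is exactly what the paper's second citation points to.
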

This shows that in our case we can eliminate the possibility of $R_{g_0}$ being positive and hence we can assume that $R_{g_0} = -1,0$. 

Now we will eliminate the possibility that the sequence $g_j$ in Conjecture \ref{MainConjecture} can by conformal to a sequence of metrics with constant scalar curvature $-1$ whose volume does not go to zero fast enough.

\begin{prop}\label{UniformFlatTori}
If $M_j^n=(M^n,g_j)$ is a sequence of Riemannian $n-$manifolds, $n \ge 3$ with scalar curvature 
\begin{align}
R_{g_j} \ge -\frac{1}{j}
\end{align}
 and volume 
 \begin{align}
 \Vol(M_j) \le V_0
 \end{align}
  then if $M_j$ is conformal to a sequence $M_{0,j}=(M,g_{0,j})$, i.e. $g_j = e^{2f_j} g_{0,j}$, with scalar curvature 
  \begin{align}
  R_{g_{0,j}} = -1
  \end{align}
  then
  \begin{align}
   \Vol(M_{0,j}) \le \frac{V_0}{j^{\frac{n}{2}}}.\label{ContradictionVolumeAssumption}
   \end{align}
\end{prop}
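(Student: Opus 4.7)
The plan is to use the conformal scalar curvature formula \eqref{ScalarFormula} to turn the hypothesis $R_{g_j} \ge -1/j$ into an elliptic inequality on $f_j$, integrate it over $\mathbb{T}^n$ to kill the Laplacian term, and then apply Hölder's inequality together with the conformal volume identity $dV_{g_j} = e^{nf_j}\, dV_{g_{0,j}}$ to close the loop.

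More concretely, first I would substitute $R_{g_{0,j}} = -1$ into \eqref{ScalarFormula} and rearrange to obtain
\begin{align}
2(n-1)\Delta^{g_{0,j}} f_j + (n-2)(n-1)|\nabla^{g_{0,j}} f_j|^2 \;\le\; \frac{e^{2f_j}}{j} - 1.
\end{align}
Integrating against $dV_{g_{0,j}}$ over $\mathbb{T}^n$, the divergence term drops out, and since the gradient term on the left is non-negative I get
\begin{align}
\Vol(M_{0,j}) \;\le\; \frac{1}{j}\int_{\mathbb{T}^n} e^{2f_j}\, dV_{g_{0,j}}.
\end{align}

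The next step is to estimate the right-hand side in terms of $\Vol(M_j)$ and $\Vol(M_{0,j})$, exactly as in the proof of Lemma \ref{SobolevConvergence1}. By Hölder's inequality with exponents $n/2$ and $n/(n-2)$, and using $dV_{g_j} = e^{nf_j}\, dV_{g_{0,j}}$,
\begin{align}
\int_{\mathbb{T}^n} e^{2f_j}\, dV_{g_{0,j}} \;\le\; \Vol(M_j)^{2/n}\,\Vol(M_{0,j})^{(n-2)/n} \;\le\; V_0^{2/n}\,\Vol(M_{0,j})^{(n-2)/n}.
\end{align}
Combining the two displayed inequalities yields $\Vol(M_{0,j})^{2/n} \le V_0^{2/n}/j$, i.e.\ $\Vol(M_{0,j}) \le V_0/j^{n/2}$, which is \eqref{ContradictionVolumeAssumption}.

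I do not anticipate a serious obstacle here; the argument is essentially a one-line integration followed by Hölder, and it reuses the same manipulations already carried out in Section \ref{subsect:ConfFlatCase}. The only thing worth double-checking is the sign bookkeeping: the sign of $R_{g_{0,j}} = -1$ is precisely what produces the $-\Vol(M_{0,j})$ term that gets absorbed and gives the decaying bound, so the argument genuinely uses the constant negative scalar curvature hypothesis rather than merely $R_{g_{0,j}} \le 0$.
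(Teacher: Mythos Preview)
Your proof is correct and follows essentially the same route as the paper: derive the elliptic inequality from \eqref{ScalarFormula} with $R_{g_{0,j}}=-1$, integrate to obtain $(n-2)(n-1)\int|\nabla f_j|^2 \le \frac{1}{j}\int e^{2f_j} - \Vol(M_{0,j})$, and then apply H\"older with exponents $n/2$ and $n/(n-2)$. The only cosmetic difference is that the paper packages the final step as a proof by contradiction (assuming $\Vol(M_{0,j}) > V_0/j^{n/2}$ and deriving that the nonnegative gradient integral is negative), whereas you argue directly; your version is in fact slightly cleaner.
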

\begin{proof}
Assume on the contrary that $M_j$ is a Riemannian $n-$manifold, $n \ge 3$ with scalar curvature $R_{g_j} \ge -\frac{1}{j}$ that is conformal to the sequence $g_{0,j}$ with scalar curvature $R_{g_{0,j}} \equiv -1$ and 
\begin{align}
 \Vol(M_{0,j}) > \frac{V_0}{j^{\frac{n}{2}}}.
 \end{align}
  So there exists a sequence $f_j \in C^2(M)$ such that $g_j = e^{2f_j} g_{0,j}$ and by \eqref{ScalarFormula} we find
\begin{align}
R_{g_j} &= e^{-2f_j} \left(R_{g_{0,j}} - 2(n-1)\Delta^{g_{0,j}} f_j -(n-2)(n-1)|\nabla^{g_{0,j}} f_j|^2 \right) \ge -\frac{1}{j}.
\end{align}

By applying our assumptions we find
\begin{align}
e^{-2f_j} \left(-1 - 2(n-1)\Delta^{g_{0,j}} f_j -(n-2)(n-1)|\nabla ^{g_{0,j}}f_j|^2 \right) \ge -\frac{1}{j}
\end{align}
and be rearranging we find
\begin{align}
 2(n-1)\Delta^{g_{0,j}} f_j +(n-2)(n-1)|\nabla^{g_{0,j}} f_j|^2  \le \frac{e^{2f_j}}{j}-1.
\end{align}

Now by integrating this inequality over $M_{0,j}$ we find
\begin{align}\label{CrucialReductionFormula}
(n-2)(n-1) \int_{M} |\nabla^{g_{0,j}} f_j|^2 dV_{g_{0,j}} \le \frac{1}{j} \int_{M} e^{2f_j}dV_{g_{0,j}} - Vol(M_{0,j}).
\end{align}
Recall that $Vol(M_j) = \int_{M_{0,j}} e^{nf_j}dV$ and by H\"{o}lder's inequality with $p = n/2$ and $q = \frac{n}{n-2}$
\begin{align}\label{HolderConsequenceReduction}
\int_{M} e^{2f_j}dV_{g_{0,j}} &\le Vol(M_{0,j})^{\frac{n-2}{n}}\left(\int_{M}e^{nf_j}dV_{g_{0,j}} \right)^{\frac{2}{n}} 
\\&= Vol(M_{0,j})^{\frac{n-2}{n}}Vol(M_j)^{\frac{2}{n}}  \le Vol(M_{0,j})^{\frac{n-2}{n}}V_0^{\frac{2}{n}}.
\end{align}
Now by combining this inequality with \eqref{CrucialReductionFormula} we find
\begin{align}
(n-2)(n-1) \int_{M_{0,j}} |\nabla^{g_{0,j}} f_j|^2 dV_{g_{0,j}} &\le \frac{1}{j}Vol(M_{0,j})^{\frac{n-2}{n}}V_0^{\frac{2}{n}} - Vol(M_{0,j}) 
\\&= Vol(M_{0,j})^{\frac{n-2}{n}} \left( \frac{V_0^{\frac{2}{n}}}{j} - Vol(M_{0,j})^{2/n}\right).\label{SignConsequenceInequality}
\end{align}

Then by \eqref{SignConsequenceInequality} and \eqref{ContradictionVolumeAssumption} we find
\begin{align}
(n-2)(n-1) \int_{M} |\nabla^{g_{0,j}} f_j|^2 dV_{0,j} &\le Vol(M_{0,j})^{\frac{n-2}{n}} \left( \frac{V_0^{\frac{2}{n}}}{j} - \left(\frac{V_0}{j^{\frac{n}{2}}}\right)^{\frac{2}{n}}\right) < 0\label{ContradictionInequality1}
\end{align}
which leads to a contradiction since the left hand side of \eqref{ContradictionInequality1} is non-negative and the right hand side is negative for large $j$.

\end{proof}

The last theorem allowed us to rule out an important case but we still have the possibility that $M_{0,j}$ can have Vol$(M_{0,j}) \rightarrow 0$. Since there are metrics of constant negative scalar curvature whose volume goes to zero this case needs to be dealt with separately. We will not be able to handle this case entirely in this paper but we note that this provides one possibly approach to proving Conjecture \ref{MainConjecture} in general which was discussed in Remark \ref{remark:MainThm2Discussion}.

 We now define the conformal class of a Riemannian metric and the Yamabe constant for the conformal class of a Riemannian metric.

\begin{defn}
Let $g$ be a Riemannian metric on $M$ and define the conformal class
\begin{align}
[g] = \{g' \text{ Riemannian metric }: g' = u^{\frac{4}{n-2}} g, \qquad u \in C^{\infty}(M)\},
\end{align}
and the Yamabe constant
\begin{align}
\mu(M,[g]) = \inf_{u \in W^{1,2}(M), u > 0} \frac{ \left(\int_M 4\frac{n-1}{n-2} |\nabla u|_g^2 + R_g u^2 dV\right)}{\left(\int_M u^{\frac{2n}{n-2}} dV \right)^{\frac{n-2}{n}}}.
\end{align}
\end{defn}

Note that by the Theorem \ref{Yamabe Problem} we know that $\mu(M,g)$ is realized by a constant scalar curvature equal to $\mu(M,g)$ metric for compact manifolds $M$. We now see that for sequences of Yamabe metrics that converge in $C^0$ and whose scalar curvatures converge must have the Yamabe constants of their conformal classes converge.

\begin{thm}[B. Bergery \cite{Ber81}]\label{Maximizers of Yamabe}
Let $M_i=(M,g_i)$ be a sequence of smooth metrics with scalar curvature $R_i$ and conformal class $[g_i]$. Assume that $g_i \rightarrow g_{\infty}$ in $C^0$ and $R_i \rightarrow R_{\infty}$ in $C^0$. Then, 
\begin{align}
\lim_{i \rightarrow \infty} \mu(M,[g_i]) = \mu(M,[g_{\infty}]).
\end{align} 

\end{thm}

We can now use this result to see that if we have a background sequence of Yamabe metrics that converge to a Riemannian metric whose scalar curvatures converge to zero then they must converge to a flat torus. This justifies the assumption in Theorem \ref{MainThm3}.

\begin{cor}\label{Implies Convergence to Flat Tori}
For a sequence of Riemannian manifolds $\tilde{M}_{0,j}=(M,\tilde{g}_{0,j})$ with constant negative scalar curvature $R_{g_{0,j}} \rightarrow 0$ and unit volume, so that
\begin{align}
\tilde{g}_{0,j} \rightarrow g_0 \text{ in } C^0,
\end{align}
 where $g_0$ is a Riemannian metric on $\mathbb{T}^n$ then $g_0$ is a flat torus where $\mathbb{T}^n_0=(\mathbb{T}^n,g_0)$, so that if $\theta_i \in [0,2\pi], 1 \le i \le n,$ are coordinates on $\Tor^n$ then
\begin{align}
0<c\le|(g_0)_{ik}|\le C<\infty, \quad (g_0)_{ik} \in \R,\quad 1 \le i,k\le n,
\end{align}

\end{cor}
\begin{proof}
By Theorem \ref{Maximizers of Yamabe} we know that 
\begin{align}
\lim_{i \rightarrow \infty} \mu(\mathbb{T}^n,[\tilde{g}_{0,j}]) = \mu(\mathbb{T}^n,[g_0]).
\end{align} 
Now since we know that $R_{g_{0,j}}$ is a constant which converges to $0$ we have that
\begin{align}\label{Maximizes Yamabe Constant}
\lim_{i \rightarrow \infty} \mu(\mathbb{T}^n,[\tilde{g}_{0,j}]) = \mu(\mathbb{T}^n,[g_0]) = 0,
\end{align} 
which implies that $g_0$ must be an Einstein metric on $\mathbb{T}^n$. Hence $g_0$ must have constant zero sectional curvature which implies that $g_0$ is the metric of a flat torus on $\mathbb{T}^n$ (See Proposition 1.3 of Schoen \cite{Schoen87}).
\end{proof}

See chapter 4 and 12 of A. Besse's book \cite{Bess87} for a further discussion of topics related to Theorem \ref{Maximizers of Yamabe} and Corollary \ref{Implies Convergence to Flat Tori}. A comparison argument by Aubin \cite{Aubin76} implies that for any conformal class $[g]$ on $M$ the Yamabe constant can be uniformly bounded above by the Yamabe constant of the round sphere
\begin{align}
\mu(M,[g]) \le \mu(\mathbb{S}^n, [g_0]),
\end{align}
where $g_0$ is the round metric of unit volume on $\mathbb{S}^n$. This means that the suprememum over conformal classes is well defined which motivates the following definition.

\begin{defn}
The Sigma constant of a manifold $M$ is
\begin{align}
\sigma(M)=\sup_{[g]\in \mathcal{C}} \mu(M,[g]),
\end{align}
where $\mathcal{C}$ is the set of all conformal structures on $M$.
\end{defn}

The Sigma constant of a torus is zero, $\sigma(\mathbb{T}^n)=0$, which is realized by flat tori which are clearly not unique. In section 4 of Anderson \cite{Anderson97} the author discusses maximizing sequences of Yamabe metrics which realize the sigma constant of a manifold. Three important conjectures are made about the convergence of such sequences in the case where $\sigma(M) < 0$, $\sigma(M) = 0$, or $\sigma(M) > 0$. In the case where $M=\mathbb{T}^n$ it is conjectured that if the sequence of Yamabe metrics realizing $\sigma(\mathbb{T}^n)=0$ does not degenerate then it will converge to a flat torus. What convergence is expected depends on extra conditions one imposes on the sequence and the topology one considers on the space of Yamabe metrics. In Theorem \ref{MainThm3} we assume $C^1$ convergence of the maximizing sequence of Yamabe metrics which is used in the proof.  It is then further noted that a sequence of Yamabe metrics realizing $\sigma(\mathbb{T}^n)=0$ could degenerate to a lower dimensional flat torus $\mathbb{T}^k$, $0 < k < n$ or could degenerate to a point. In the current paper we do not deal with these cases of degeneration but it is the subject of further study. These cases make the analysis quite delicate and are expected to be rather difficult to deal with. In general, further study of maximizing sequences of Yamabe metrics and their convergence is interesting in its own right and further motivated by the results of this paper.

\section{Sequences Conformal to Flat Tori}\label{sect:ConfFlatCase}

In this section we study sequences which satisfy Conjecture \ref{MainConjecture} and which are conformal to a flat torus.  Throughout this section we will use the important results of  Lemma \ref{SobolevConvergence1} derived in section \ref{sec: background}.

\subsection{$C^0$ Convergence from Below}\label{subsec:C^0 Convergence}

Now we would like to study the equations of Lemma \ref{SobolevConvergence1}. By using an auxiliary function we will be able to apply the mean value property for subharmonic functions to obtain $C^0$ control from below on the conformal factors satisfying the hypotheses of Theorem \ref{MainThm}.

\begin{lem}\label{C0Control1}For $e^{-2f_j}$ a solution to \begin{align}\label{PDEInequality}
-(n-1)\Delta^{g_{0,j}} e^{-2f_j}  \le  -(n-1)\Delta^{g_{0,j}} e^{-2f_j} + (n-1)(n+2) |\nabla^{g_{0,j}} e^{-f_j}|^2  \le \frac{1}{j},
\end{align} 
where the equation is defined with respect to $g_{0,j}$ so that if $\theta_i \in [0,2\pi], 1 \le i \le n,$ are coordinates on $\Tor^n$ we find
\begin{align}
0<c\le|(g_{0,j})_{ik}|\le C<\infty, \quad (g_{0,j})_{ik} \in \R,\quad 1 \le i,k\le n,
\end{align}
then there exists an $R>0$ such that
\begin{align}
\lp\limsup_{j \rightarrow \infty}\max_{x \in \mathbb{T}^n}\dashint_{B_{M_{0,j}}(x,r)} e^{-2f_j} dV_{g_{0,j}}\rp^{-1} \le \liminf_{j \rightarrow \infty} \min_{\mathbb{T}^n} e^{2f_j}, \qquad r \le R.
\end{align}
\end{lem}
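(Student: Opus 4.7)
The plan is to read the first inequality in \eqref{PDEInequality} as saying that $u_j := e^{-2f_j}$ is subharmonic for $\Delta^{g_{0,j}}$ up to an error of order $1/j$, absorb this error by a quadratic corrector, apply the classical mean value inequality for subharmonic functions, and then reciprocate to swap $\limsup$ with $\liminf$.

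First, setting $u_j := e^{-2f_j} > 0$, the hypothesis rewrites as $\Delta^{g_{0,j}} u_j \ge -\tfrac{1}{(n-1)j}$. Lifting to the universal cover $\mathbb{R}^n$ and working inside a lift of $B_{\mathbb{T}^n}(x,r)$ with $r \le \pi$ (so that the ball is embedded), I would introduce a quadratic corrector $\phi_j(\theta) := \alpha_j |\theta - x|^2$ and choose the scalar $\alpha_j$ so that the constant-coefficient equation $\Delta^{g_{0,j}} \phi_j \equiv \tfrac{1}{(n-1)j}$ holds. The lower bound $(g_{0,j})_{ik} \ge c$ forces $\alpha_j = O(1/j)$ uniformly in $j$ and in $x$, so $\sup_{B_{\mathbb{T}^n}(x,r)} |\phi_j| = O(1/j)$ uniformly. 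The shifted function $w_j := u_j + \phi_j \ge 0$ is then pointwise subharmonic for $\Delta^{g_{0,j}}$.

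Because $\Delta^{g_{0,j}}$ has constant coefficients, a linear change of coordinates converts it into the standard Laplacian, and the classical mean value inequality for subharmonic functions yields
\begin{align*}
w_j(x) \le \dashint_{B_{g_{0,j}}(x,\rho)} w_j \, dV_{g_{0,j}}
\end{align*}
for every admissible radius $\rho$. The hypothesis $0 < c \le |(g_{0,j})_{ik}| \le C$ provides a bilipschitz comparison $\sqrt{c}\, d_{\mathbb{T}^n} \le d_{g_{0,j}} \le \sqrt{C}\, d_{\mathbb{T}^n}$; using this together with $w_j \ge 0$, I would choose $\rho$ so that $B_{g_{0,j}}(x,\rho)$ is contained in $B_{\mathbb{T}^n}(x,r)$, and transfer the bound to obtain
\begin{align*}
u_j(x) \le \dashint_{B_{\mathbb{T}^n}(x,r)} u_j \, dV_{g_{0,j}} + O(1/j).
\end{align*}
Taking $\max_{x \in \mathbb{T}^n}$ on both sides and then $\limsup_{j \to \infty}$ discards the $O(1/j)$ error. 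Finally, using $\max e^{-2f_j} = (\min e^{2f_j})^{-1}$ and that reciprocation exchanges $\limsup$ with $\liminf$ for positive sequences, inverting both sides produces the stated inequality.

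The delicate step I expect, and the one requiring the most care, is the passage from the mean value inequality (naturally formulated on the $g_{0,j}$-balls, since $w_j$ is subharmonic with respect to $\Delta^{g_{0,j}}$) to the inequality as stated, which uses $B_{\mathbb{T}^n}(x,r)$ balls. Although the two families of balls are bilipschitz equivalent by the uniform bounds on $(g_{0,j})_{ik}$, controlling the comparison cleanly, so that no extra multiplicative factor survives in the final inequality beyond the vanishing $O(1/j)$ error, is the main technical point of the proof.
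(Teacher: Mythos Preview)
Your proposal follows the same outline as the paper: add a corrector so that $e^{-2f_j}$ plus corrector is $\Delta^{g_{0,j}}$-subharmonic, invoke the mean value inequality, then reciprocate using $\max e^{-2f_j}=(\min e^{2f_j})^{-1}$. The only difference is the corrector itself: the paper takes $e^{\alpha\theta_1}$ with $\alpha=(g_{0,j})_{11}/\sqrt{(n-1)j}$, while you take a centered quadratic $\alpha_j|\theta-x|^2$. Your choice is a little cleaner, since it vanishes at the evaluation point and is uniformly $O(1/j)$ on the ball; with the exponential one must still argue that the average of $e^{\alpha\theta_1}$ over the ball minus its value at the center is $o(1)$ as $\alpha\to 0$.

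On the point you flag as delicate---passing from $g_{0,j}$-balls, where the sub-mean-value inequality for $\Delta^{g_{0,j}}$ naturally lives, to $B_{\mathbb{T}^n}(x,r)$---you are right that a naive containment $B_{g_{0,j}}(x,\rho)\subset B_{\mathbb{T}^n}(x,r)$ together with $w_j\ge 0$ does not control the \emph{average}: both numerator and denominator change, and a bilipschitz comparison would leave a multiplicative constant depending on the ellipticity ratio, which would survive after reciprocation and spoil the stated inequality. The paper's proof does not address this either; it simply writes the mean value inequality over $B_{\mathbb{T}^n}(x,r)$ with measure $dV_{g_{0,j}}$ without comment. The cleanest resolution is to state and prove the lemma with $g_{0,j}$-balls in place of $B_{\mathbb{T}^n}$-balls: nothing is lost downstream, because the next proposition in the paper establishes that $\dashint_{U} e^{-2f_k}\,dV_{g_{0,k}}\to (\overline{e^{-f_k}})^2$ uniformly over any family of sets $U$ of definite volume, so the particular shape of the balls is immaterial for the application.
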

\begin{proof}
Let $(\theta_1,\theta_2,...,\theta_n)$, $ \theta_i \in [0,2\pi]$, $1 \le i \le n$ be coordinates on $\mathbb{T}^n$. Then consider $ e^{\alpha \theta_1}$, $\alpha > 0$ and compute
\begin{align}
\Delta^{g_{0,j}}(e^{\alpha \theta_1}) = \frac{\alpha^2}{(g_{0,j})_{11}^2}e^{\alpha \theta_1}  \ge  \frac{\alpha^2}{(g_{0,j})_{11}^2}
\end{align}
Then if we consider $e^{-2f_j} +e^{\alpha \theta_1}$ then we find
\begin{align}
-\Delta^{g_{0,j}}(e^{-2f_j} +e^{\alpha \theta_1}) \le \frac{1}{(n-1)j} -\frac{\alpha^2}{(g_{0,j})_{11}^2} =0\label{subharmonicEq}
\end{align}
where we choose $\alpha = \frac{(g_{0,j})_{11}}{\sqrt{(n-1)j}}$.
So  $e^{-2f_j}+e^{\alpha \theta_1}$ is a solution to \eqref{subharmonicEq} it follows from the mean value property for subharmonic functions that if one chooses an $R>0$ so that $B_{M_{0,j}}((\pi,\pi,...,\pi),r) \subset [0,2\pi]^n$ for all $j\in \N$ and $0<r<R$ then
\begin{align}
e^{-2f_j(\pi,\pi,...,\pi)} \le \dashint_{B_{M_{0,j}}((\pi,\pi,...,\pi),r)} e^{-2f_j} dV_{g_{0,j}} + e^{\frac{2\pi (g_{0,j})_{11}}{\sqrt{(n-1)j}}} , \qquad r \le R.
\end{align}
Note that by recentering the coordinate system on $\mathbb{T}^n$ we can repeat this argument for any $x \in \mathbb{T}^n$.
Hence this implies
\begin{align}
\limsup_{j \rightarrow \infty}\max_{\Tor^m} e^{-2f_j} \le \limsup_{j \rightarrow \infty}\max_{x \in \mathbb{T}^n}\dashint_{B_{M_{0,j}}(x,r)} e^{-2f_j} dV_{g_{0,j}}, \qquad r \le R.
\end{align}
The statement in the theorem follows by rearranging terms and using properties of reciprocals.
\end{proof}

In order for Lemma \ref{C0Control1} to be useful we will need to control $\dashint_{B_{M_{0,j}}(x,r)} e^{-2f_j} dV_{g_{0,j}}$ and we begin by gaining control of $\dashint_{\Tor^m} e^{-2f_j} dV_{g_{0,j}}$ in Lemma \ref{e^(-2f_j) Control Summary}.

\begin{lem}\label{e^(-2f_j) Control Summary}
Let $M_j=(\mathbb{T}^n,g_j)$ be a sequence of Riemannian manifolds such that $g_j = e^{2f_j} g_{0,j}$ where $M_{0,j}=(\mathbb{T}^n,g_{0,j})$ is a flat torus and
\begin{align}
\int_{\mathbb{T}^n} e^{-2f_j} d V_{g_{0,j}} \le C.
\end{align} 
Then we find that
\begin{align}
\frac{\Vol(M_{0,j})^{\frac{n+2}{n}}}{\Vol(M_j)^{\frac{2}{n}}}\le \int_{\mathbb{T}^n} e^{-2f_j} d V_{g_{0,j}} &\le C,
\\\frac{\Vol(M_{0,j})^{\frac{n+1}{n}}}{\Vol(M_j)^{\frac{1}{n}}}\le \int_{\mathbb{T}^n} e^{-f_j} d V_{g_{0,j}} &\le \sqrt{C}\Vol(M_{0,j})^{\frac{1}{2}} .
\end{align}
\end{lem}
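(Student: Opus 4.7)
The lemma packages four inequalities, one of which ($\int e^{-2f_j}\, dV_{g_{0,j}} \le C$) is immediate from the hypothesis. The remaining three relate $\Vol(M_{0,j})$, $\Vol(M_j) = \int e^{n f_j}\, dV_{g_{0,j}}$, and the integrals $\int e^{-p f_j}\, dV_{g_{0,j}}$ for $p = 1, 2$. My strategy is to derive both lower bounds from H\"older's inequality applied to the trivial splitting $1 = e^{-a f_j} \cdot e^{a f_j}$ inside $\Vol(M_{0,j}) = \int_{\Tor^n} dV_{g_{0,j}}$, choosing the conjugate exponents so that the two resulting factors are precisely $\int e^{-p f_j}\, dV_{g_{0,j}}$ and $\Vol(M_j)$. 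The remaining upper bound on $\int e^{-f_j}$ will come from Cauchy--Schwarz combined with the hypothesis.

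For the first lower bound, I would take H\"older exponents $p = (n+2)/n$ and $q = (n+2)/2$, with $a = 2n/(n+2)$ so that $ap = 2$ and $aq = n$. This produces
\begin{align*}
\Vol(M_{0,j}) \le \left(\int_{\Tor^n} e^{-2 f_j}\, dV_{g_{0,j}}\right)^{\frac{n}{n+2}} \Vol(M_j)^{\frac{2}{n+2}},
\end{align*}
and raising both sides to the $(n+2)/n$ power and rearranging gives the claimed lower bound for $\int e^{-2 f_j}\, dV_{g_{0,j}}$.

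For the second lower bound, the same device with $p = (n+1)/n$, $q = n+1$, and $a = n/(n+1)$ (so $ap = 1$, $aq = n$) yields
\begin{align*}
\Vol(M_{0,j}) \le \left(\int_{\Tor^n} e^{-f_j}\, dV_{g_{0,j}}\right)^{\frac{n}{n+1}} \Vol(M_j)^{\frac{1}{n+1}},
\end{align*}
which, raised to the $(n+1)/n$ power, gives the bound. Finally, Cauchy--Schwarz together with the hypothesis yields
\begin{align*}
\int_{\Tor^n} e^{-f_j}\, dV_{g_{0,j}} \le \Vol(M_{0,j})^{1/2} \left(\int_{\Tor^n} e^{-2 f_j}\, dV_{g_{0,j}}\right)^{1/2} \le \sqrt{C}\, \Vol(M_{0,j})^{1/2}.
\end{align*}

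No step here presents a real obstacle; the argument is pure H\"older bookkeeping, and the only insight required is to pick the conjugate pair so that one factor of the H\"older splitting recovers $\Vol(M_j)$ exactly while the other recovers the desired negative moment of $e^{f_j}$.
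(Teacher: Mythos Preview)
Your proof is correct, and it is slightly more direct than the paper's. The paper obtains the two lower bounds in two steps: first Jensen's inequality for the convex function $\varphi(x)=1/x$ gives
\[
\dashint_{\Tor^n} e^{-2f_j}\,dV_{g_{0,j}} \ge \Big(\dashint_{\Tor^n} e^{2f_j}\,dV_{g_{0,j}}\Big)^{-1},
\qquad
\dashint_{\Tor^n} e^{-f_j}\,dV_{g_{0,j}} \ge \Big(\dashint_{\Tor^n} e^{f_j}\,dV_{g_{0,j}}\Big)^{-1},
\]
and then H\"older (with exponents $n/2$, $n/(n-2)$, respectively $n$, $n/(n-1)$) bounds $\int e^{2f_j}$ and $\int e^{f_j}$ above by $\Vol(M_j)^{2/n}\Vol(M_{0,j})^{(n-2)/n}$ and $\Vol(M_j)^{1/n}\Vol(M_{0,j})^{(n-1)/n}$. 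Combining the two steps yields exactly your inequalities. Your single H\"older splitting $1=e^{-af_j}e^{af_j}$ with the exponent pairs $\big(\tfrac{n+2}{n},\tfrac{n+2}{2}\big)$ and $\big(\tfrac{n+1}{n},n+1\big)$ collapses these two steps into one and lands directly on the same bounds; in effect you have absorbed the Jensen step into the choice of H\"older exponents. The upper bound on $\int e^{-f_j}$ via Cauchy--Schwarz is identical in both arguments. Neither route offers a materially stronger conclusion, but yours is a bit cleaner bookkeeping.
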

\begin{proof}
The upper bound on $\int_{\mathbb{T}^n} e^{-2f_j} d V_{g_{0,j}}$ follows by assumption. The lower bound follows from Jensen's Inequality since $\varphi(x) = \frac{1}{x}$ is a convex functions and hence
\begin{align}
\dashint_{\mathbb{T}^n} e^{-2f_j} d V_{g_{0,j}} &\ge \frac{1}{\dashint_{\mathbb{T}^n} e^{2f_j} d V_{g_{0,j}}},
\\\dashint_{\mathbb{T}^n} e^{-f_j} d V_{g_{0,j}} &\ge \frac{1}{\dashint_{\mathbb{T}^n} e^{f_j} d V_{g_{0,j}}},
\end{align}
which implies
\begin{align}
\int_{\mathbb{T}^n} e^{-2f_j} d V_{0,j} &\ge \frac{\Vol(M_{0,j})^2}{\int_{\mathbb{T}^n} e^{2f_j} d V_{g_{0,j}}},\label{JensenConsequence}
\\\int_{\mathbb{T}^n} e^{-f_j} d V_{0,j} &\ge \frac{\Vol(M_{0,j})^2}{\int_{\mathbb{T}^n} e^{f_j} d V_{g_{0,j}}}.\label{JensenConsequence2}
\end{align}
By H\"{o}lder's inequality
\begin{align}
\int_{T^n} e^{2f_j} d V_{g_{0,j}} &\le \left(\int_{T^n} e^{nf_j} d V_{g_{0,j}}\right)^{\frac{2}{n}} \Vol(M_{0,j})^{\frac{n-2}{n}}
\le \Vol(M_j)^{\frac{2}{n}}\Vol(M_{0,j})^{\frac{n-2}{n}} ,
\\\int_{T^n} e^{f_j} d V_{g_{0,j}} &\le \left(\int_{T^n} e^{nf_j} d V_{g_{0,j}}\right)^{\frac{1}{n}} \Vol(M_{0,j})^{\frac{n-1}{n}}
\le \Vol(M_j)^{\frac{1}{n}}\Vol(M_{0,j})^{\frac{n-1}{n}} ,
\end{align}
which when combined with \eqref{JensenConsequence} and \eqref{JensenConsequence2} we find
\begin{align}
\int_{\mathbb{T}^n} e^{-2f_j} d V_{g_{0,j}} &\ge \frac{\Vol(M_{0,j})^{\frac{n+2}{n}}}{\Vol(M_j)^{\frac{2}{n}}},
\\\int_{\mathbb{T}^n} e^{-f_j} d V_{g_{0,j}} &\ge \frac{\Vol(M_{0,j})^{\frac{n+1}{n}}}{\Vol(M_j)^{\frac{1}{n}}}.
\end{align}
The upper bound on $\int_{\mathbb{T}^n} e^{-f_j} d V_{g_{0,j}}$ again follows from H\"{o}lder's inequality.
\end{proof}

\subsection{Volume Convergence and $C^0$ Convergence from Below}\label{subsec:L^3 Conv}

We note that by the upper bound on volume hypothesis there must be a subsequence whose volume converges to a finite number. Now we need to ensure that the volume convergence agrees with the $C^0$ convergence from below of subsection \ref{subsec:C^0 Convergence}. This is done by combining  Corollary \ref{e^(-2f_j) Control Summary} with Lemma \ref{SobolevConvergence1} to give a relationship between volume convergence and the $C^0$ convergence from below.

\begin{prop}\label{L^3 Agrees with C^0 2}
Let $M_j$ be a sequence of Riemannian manifolds such that $g_j = e^{2f_j} g_{0,j}$ where $M_{0,j}=(\mathbb{T}^n,g_{0,j})$ is a flat torus 
so that if $\theta_i \in [0,2\pi], 1 \le i \le n,$ are coordinates on $\Tor^n$ we find
\begin{align}
0<c\le|(g_{0,j})_{ik}|\le C<\infty, \quad (g_{0,j})_{ik} \in \R,\quad 1 \le i,k\le n,
\end{align}
\begin{align}
\Vol(M_j)\le V_0,
\end{align}
\begin{align}
\int_{\mathbb{T}^n} e^{-2f_j} d V_{g_{0,j}} \le C,
\end{align}
and
 \begin{align}
\exists q > 0 \text{ so that } \forall E \subset \mathbb{T}^n \text{ measurable } \Vol_{g_j}(E) \le C \Vol_{g_{0,j}}(E)^q,
 \end{align}
 Then  there exists a subsequence so that
\begin{align}
\lim_{k \rightarrow \infty}\dashint_{\Tor^n} e^{nf_k} dV_{g_{0,k}} = \lim_{k \rightarrow \infty} \left (\dashint_{\Tor^n} e^{-2f_k} dV_{g_{0,k}} \right)^{-n/2} = \lim_{k \rightarrow \infty}(\overline{e^{-f_k}})^{-n},
\end{align}
where 
\begin{align}
\overline{e^{-f_k}} = \dashint_{\Tor^n}e^{-f_k}dV_{g_{0,k}},
\end{align}
and there exists an $R>0$ such that
\begin{align}
 \forall r \le R \quad \lim_{k \rightarrow \infty}\max_{x \in \mathbb{T}^n} \left (\dashint_{B_{M_{0,k}}(x,r)} e^{-2f_k} dV_{g_{0,k}} \right) = \lim_{k \rightarrow \infty}(\overline{e^{-f_k}})^{2}.
\end{align}
\end{prop}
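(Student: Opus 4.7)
The plan is to extract subsequences so that the background metrics converge and $\overline{e^{-f_k}}\to c_\infty$, use the gradient decay of Lemma~\ref{SobolevConvergence1} and Poincar\'e to upgrade this to $L^2$ convergence $e^{-f_k}\to c_\infty$, and then transfer this convergence to the three claimed limits via Vitali's convergence theorem and a direct Cauchy--Schwarz argument on balls.

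First, using $0 < c \le |(g_{0,k})_{ik}| \le C$, extract a subsequence with $(g_{0,k})_{ik}\to(g_{0,\infty})_{ik}$, so that $\Vol(M_{0,k})\to\Vol(M_{0,\infty})>0$ and the volume forms $dV_{g_{0,k}}\to dV_{g_{0,\infty}}$ uniformly. Lemma~\ref{e^(-2f_j) Control Summary} yields uniform upper and positive lower bounds on $\overline{e^{-f_k}}$ (H\"older from $\int e^{-2f_k}\,dV_{g_{0,k}}\le C$, and Jensen combined with $\Vol(M_k)\le V_0$ and the uniform positive lower bound on $\Vol(M_{0,k})$), so a further subsequence has $\overline{e^{-f_k}}\to c_\infty>0$. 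Since the Poincar\'e constant on $(\Tor^n, g_{0,k})$ is uniform in $k$ (the metrics are equivalent to the standard flat one with constants independent of $k$), the second Sobolev estimate of Lemma~\ref{SobolevConvergence1},
\[
\int_{\Tor^n} |\nabla^{g_{0,k}} e^{-f_k}|^2 \, dV_{g_{0,k}} \le \frac{\Vol(M_{0,k})}{k(n-1)(n+2)} \to 0,
\]
yields $\int_{\Tor^n}(e^{-f_k}-\overline{e^{-f_k}})^2\,dV_{g_{0,k}}\to 0$. Combined with $\overline{e^{-f_k}}\to c_\infty$ this gives $L^2$ convergence $e^{-f_k}\to c_\infty$, and after a further subsequence $e^{-f_k}\to c_\infty$ a.e., hence $e^{nf_k}\to c_\infty^{-n}$ a.e.

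The limit $\dashint e^{-2f_k}\,dV_{g_{0,k}}\to c_\infty^2$ is immediate from the $L^2$ convergence and $\Vol(M_{0,k})\to\Vol(M_{0,\infty})$. For $\dashint e^{n f_k}\,dV_{g_{0,k}}\to c_\infty^{-n}$, the hypothesis \eqref{UniformIntegrability} is precisely equi-absolute-continuity of the measures $e^{n f_k}\,dV_{g_{0,k}}$ (given $\varepsilon>0$, take $\delta=(\varepsilon/C)^{1/q}$); combined with the uniform $L^1$ bound $\int e^{nf_k}\,dV_{g_{0,k}}=\Vol(M_k)\le V_0$ and the a.e. convergence, Vitali's theorem yields $L^1$ convergence, giving the identity. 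For the maximum over balls, I would use Cauchy--Schwarz:
\[
\Bigl|\dashint_{B_{\Tor^n}(x,r)} e^{-2f_k}\,dV_{g_{0,k}}-c_\infty^2\Bigr|\le\frac{\|e^{-f_k}-c_\infty\|_{L^2(dV_{g_{0,k}})}\,\|e^{-f_k}+c_\infty\|_{L^2(dV_{g_{0,k}})}}{\Vol_{g_{0,k}}(B_{\Tor^n}(x,r))}.
\]
For $r\le\pi$ the denominator is uniformly bounded below in $x$ and $k$ (the determinant of $g_{0,k}$ is bounded below from the coefficient bounds), so the right-hand side tends to $0$ uniformly in $x$, giving $\max_x\dashint_{B_{\Tor^n}(x,r)}e^{-2f_k}\,dV_{g_{0,k}}\to c_\infty^2$.

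The main delicate point I expect is verifying that the Poincar\'e constant on $(\Tor^n,g_{0,k})$ is genuinely uniform in $k$ from only coefficient bounds on $(g_{0,k})_{ik}$, and a clean application of Vitali's theorem with the varying (but uniformly convergent) measures $dV_{g_{0,k}}$. Both are handled by the uniform equivalence of $g_{0,k}$ to the standard flat metric on $\Tor^n$, which reduces all the functional inequalities to their standard flat torus counterparts up to uniform multiplicative constants.
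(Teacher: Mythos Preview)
Your proposal is correct and follows essentially the same route as the paper: Poincar\'e combined with the gradient estimate of Lemma~\ref{SobolevConvergence1} gives $L^2$ convergence of $e^{-f_k}$ to its average (bounded away from $0$ and $\infty$ by Lemma~\ref{e^(-2f_j) Control Summary}), a.e.\ convergence on a subsequence then feeds into the uniform-integrability hypothesis to handle $\int e^{nf_k}$, and an $L^2$ argument uniform in the center handles the ball averages. The only cosmetic differences are that the paper invokes Egoroff plus the uniform-integrability bound directly where you invoke Vitali, and uses the reverse triangle inequality for $L^2$-norms on $U$ where you use Cauchy--Schwarz; both pairs are equivalent.
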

\begin{proof}
We first note that 
 \begin{align}
 \Vol_{g_j}(E) \le C \Vol_{g_{0,j}}(E)^q,
 \end{align}
implies
 \begin{align}
 \int_{E}e^{n f_j} dV_{g_{0,j}} \le C \Vol_{g_{0,j}}(E)^{q},
 \end{align}
 which is uniform integrability for the function $e^{nf_j}$ and hence when combined with the upper bound on volume implies a subsequence converges weakly in $L^1$.
 
Lemma \ref{SobolevConvergence1} and the Poincare inequality imply
\begin{align}\label{Poincare Consequence}
C_P \int_{\Tor^n}|e^{-f_j} - \overline{e^{-f_j}}|^2dV_{g_{0,j}} \le \int_{\Tor^n} |\nabla^{g_{0,j}} e^{-f_j}|^2  dV_{g_{0,j}} &\le \frac{Vol(M_{0,j})}{j(n-1)(n+2)},
\end{align}
where the constant in the Poincare inequality is uniform by the uniform bounds on the background sequence of flat tori $M_{0,j}$.
\eqref{Poincare Consequence} implies $L^2$ convergence of $e^{-f_j}$ to its average as well as pointwise a.e. convergence on a subsequence of $e^{-f_k}$ to its average,
\begin{align}
\overline{e^{-f_k}} = \dashint_{\Tor^n}e^{-f_j}dV_{g_{0,k}},
\end{align}
which is non-zero and well defined by Corollary \ref{e^(-2f_j) Control Summary} and where we have taken a further subsequence if necessary so that $\displaystyle \lim_{k \rightarrow \infty} \overline{e^{-f_k}}$ is well defined. This also implies convergence in $L^2$ norm for any measurable set $U \subset \mathbb{T}^n$
\begin{align}\label{ImportantConvergenceEq}
\lim_{k\rightarrow \infty}\int_{U} e^{-2f_k} dV_{g_{0,k}}=\lim_{k\rightarrow \infty} \left(\overline{e^{-f_k}}\right)^2\Vol_{g_{0,k}}(U).
\end{align} 
In fact, by the reverse triangle inequality for norms, if we have two functions $h,k:\mathbb{T}^n\rightarrow \R$ we find
\begin{align}
\left|\|h\|_{L^2_g(U)}-\|k\|_{L^2_g(U)}\right| \le \|h-k\|_{L^2_g(U)} \le \|h-k\|_{L^2_g(\mathbb{T}^n)},
\end{align}
for any Riemannian metric $g$ and hence \eqref{ImportantConvergenceEq} is uniform in $U$ and in particular uniform in $B_{M_{0,j}}(x,r)$ for all $x \in \mathbb{T}^n$, $0 < r \le R$.

We also note that we have $e^{f_k} \rightarrow\displaystyle\lim_{k\rightarrow \infty}(\overline{e^{-f_k}})^{-1}$ a.e. and hence by Egeroff's theorem one finds that for any $\varepsilon > 0$ there exists a measurable set $E \subset \mathbb{T}^n$ so that $e^{f_k}$ converges uniformly on $E$ and $dV_{g_{0,\infty}}(\mathbb{T}^n\setminus E) < \epsilon$. Hence we find
\begin{align}
\left|\int_{\mathbb{T}^n} (e^{nf_k}-(\overline{e^{-f_k}})^{-n})dV_{g_{0,k}} \right|&\le \left|\int_{E} (e^{nf_k}-(\overline{e^{-f_k}})^{-n})dV_{g_{0,k}}\right|
\\&\quad +\left|\int_{\mathbb{T}^n\setminus E} (e^{nf_k}-(\overline{e^{-f_k}})^{-n})dV_{g_{0,k}}\right|
\\&\le \Vol_{g_{0,k}}(E) \|e^{nf_k}-(\overline{e^{-f_k}})^{-n})\|_{C^0(E)}
\\&\quad + C\Vol_{g_{0,k}}(\mathbb{T}^n\setminus E)^q+(\overline{e^{-f_k}})^{-n} \Vol_{g_{0,k}}(\mathbb{T}^n\setminus E) 
\\&\le C' \left( \frac{1}{k}+\varepsilon^q + \varepsilon\right).
\end{align}
This implies volume convergence to the volume of the limit of the sequence $(\overline{e^{-f_k}})^{-2}g_{0,k}$ where again a subsequence can be taken to make this limit well defined.

To summarize we have found
\begin{align}
\lim_{k \rightarrow \infty} \max_{\mathbb{T}^n}\dashint_{B_{M_{0,k}}(x,r)} e^{-2f_k} dV_{g_{0,k}}= \lim_{k \rightarrow \infty}(\overline{e^{-f_k}})^2,\quad r \le R
\end{align}
and 
\begin{align}
\lim_{k \rightarrow \infty}\int_{\Tor^n} e^{nf_k} dV_{g_{0,k}} = \lim_{k \rightarrow \infty} \int_{\Tor^n} (\overline{e^{-f_k}})^{-n} dV_{g_{0,k}},
\end{align}
which implies the desired result.
\end{proof}

We now prove a standard lemma which shows that the uniform integrability condition is implied by $L^p$ bounds for $p > n$.

\begin{lem}\label{L^3 Agrees with C^0 2}
Let $M_j$ be a sequence of Riemannian manifolds such that $g_j = e^{2f_j} g_{0,j}$ where $M_{0,j}=(\mathbb{T}^n,g_{0,j})$ is a flat torus. If 
\begin{align}
\exists \, p > n \quad \text{ so that }\quad \int_{\Tor^n} e^{pf_j} d V_{g_{0,j}} \le C_p,
\end{align}
then $\forall E \subset \mathbb{T}^n$ measurable
 \begin{align}
 \Vol_{g_j}(E) \le C_p \Vol_{g_{0,j}}(E)^{\frac{p-n}{p}}.
 \end{align}
\end{lem}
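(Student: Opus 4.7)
The plan is to apply H\"older's inequality directly to the integral representation of $\Vol_{g_j}(E)$ coming from the conformal relation $g_j = e^{2 f_j} g_{0,j}$. Since $dV_{g_j} = e^{n f_j} dV_{g_{0,j}}$ on an $n$-manifold, one has
\begin{align}
\Vol_{g_j}(E) \;=\; \int_E e^{n f_j}\,dV_{g_{0,j}},
\end{align}
so the task reduces to bounding $\int_E e^{n f_j}\,dV_{g_{0,j}}$ by a power of $\Vol_{g_{0,j}}(E)$ using the hypothesis $\int_{\Tor^n} e^{p f_j}\,dV_{g_{0,j}} \le C_p$ for some $p > n$.

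The key step is to write $e^{n f_j} = e^{n f_j}\cdot 1$ and apply H\"older's inequality with conjugate exponents $\tfrac{p}{n}$ and $\tfrac{p}{p-n}$, which are both finite and positive precisely because $p > n$. This yields
\begin{align}
\int_E e^{n f_j}\,dV_{g_{0,j}} \;\le\; \left(\int_E e^{p f_j}\,dV_{g_{0,j}}\right)^{\!n/p}\!\left(\int_E 1\,dV_{g_{0,j}}\right)^{\!(p-n)/p}.
\end{align}
Bounding the first factor by $C_p^{n/p}$ using the hypothesis (and monotonicity of the integral over subsets of $\Tor^n$), and identifying the second factor as $\Vol_{g_{0,j}}(E)^{(p-n)/p}$, produces
\begin{align}
\Vol_{g_j}(E) \;\le\; C_p^{n/p}\,\Vol_{g_{0,j}}(E)^{(p-n)/p},
\end{align}
with exponent $q = (p-n)/p \in (0,1)$, as required. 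The constant in the statement of the lemma is $C_p$ rather than $C_p^{n/p}$, but this is only a cosmetic relabeling of the constant since $C_p \ge 1$ can be assumed without loss of generality, or one simply replaces $C_p$ by $C_p^{n/p}$ throughout.

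There is no real obstacle here; the result is essentially a restatement of the standard fact that an $L^{p/n}$ bound on a nonnegative function forces uniform absolute continuity of its integral with a quantitative H\"older-type modulus. The only point requiring any care is ensuring $p > n$ strictly, so that $q = (p-n)/p$ is positive and the exponent $p/n$ on the H\"older side is strictly greater than one; this is exactly the hypothesis. Since the lemma is only used to show that an $L^p$ bound on $e^{f_j}$ with $p>n$ implies the uniform integrability condition \eqref{UniformIntegrability} needed in Theorem \ref{MainThm}, this single application of H\"older's inequality suffices.
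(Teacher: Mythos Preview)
Your proof is correct and follows exactly the same approach as the paper: apply H\"older's inequality with exponents $p/n$ and $p/(p-n)$ to $\int_E e^{nf_j}\,dV_{g_{0,j}}$, then bound $\int_E e^{pf_j}$ by $\int_{\Tor^n} e^{pf_j}\le C_p$. Your remark about the constant is in fact sharper than the paper's own argument, which drops the exponent $n/p$ on the $e^{pf_j}$ integral and thereby arrives at $C_p$ rather than the correct $C_p^{\,n/p}$; since only the existence of some constant and some $q>0$ matters for the application to \eqref{UniformIntegrability}, this discrepancy is harmless.
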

\begin{proof}
Apply H\"{o}lder's inequality with $l=\frac{p}{n}$ and $q=\frac{p}{p-n}$ to find
\begin{align}
 \Vol_j(E)&= \int_{E} e^{nf_j} dV_{g_{0,j}}
 \\&\le \Vol_{g_{0,j}}(E)^{\frac{p-n}{p}} \int_{E} e^{pf_j} dV_{g_{0,j}}
  \\&\le \Vol_{g_{0,j}}(E)^{\frac{p-n}{p}} \int_{\Tor^n} e^{pf_j} dV_{g_{0,j}} \le  C_p\Vol_{g_{0,j}}(E)^{\frac{p-n}{p}}.
\end{align}
\end{proof}

\subsection{SWIF Convergence to a Flat Tori}\label{subsec:SWIF Conv }

In this subsection we finish the proof of Theorem \ref{MainThm} and Theorem \ref{MainThm3} by noticing that we have the necessary hypotheses to apply Theorem \ref{VolSWIFConv} of the author, Perales, and Sormani.

\begin{proof}[Proof of Theorem \ref{MainThm}]
First we notice that if $\theta_i \in [0,2\pi], 1 \le i \le n,$ are coordinates on $\Tor^n$ so that
\begin{align}
0<c\le|(g_{0,j})_{rs}|\le C<\infty, \quad (g_{0,j})_{rs} \in \R,\quad 1 \le r,s\le n,
\end{align} 
then there exists a subsequence so that
\begin{align}
\lim_{k \rightarrow \infty} (g_{0,k})_{rs} = (g_{\infty})_{rs},\quad 1 \le r,s\le n,
\end{align}
where
\begin{align}
0<C_1 \le |(g_{\infty})_{rs}| \le C_2<\infty,,\quad 1 \le r,s\le n,
\end{align}
and hence $g_{\infty}$ is a well defined flat torus on $\Tor^n$ so that
\begin{align}\label{FirstC0BelowEstimate}
\left(1 - \frac{C}{j}\right) g_{\infty}(v,v) \le g_{0,j}(v,v), \quad \forall v \in T_pM, p \in M.
\end{align}
We also note that
\begin{align}\label{SecondC0BelowEstimate}
\left(\liminf_{j \rightarrow \infty} \min_{\mathbb{T}^n}e^{2f_j}\right) g_{0,j}(v,v) \le e^{2f_j}g_{0,j}(v,v)=g_j(v,v),\quad \forall v \in T_pM, p \in M.
\end{align}
When we combine \eqref{FirstC0BelowEstimate} and\eqref{SecondC0BelowEstimate} with Lemma \ref{C0Control1} we find
\begin{align}\label{ThirdC0BelowEstimate}
\lp\limsup_{j \rightarrow \infty}\max_{x \in \mathbb{T}^n}\dashint_{B_{M_{0,j}}(x,r)} e^{-2f_j} dV_{g_{0,j}}\rp^{-1}\left(1 - \frac{C}{j}\right) g_{\infty}(v,v) \le g_j(v,v), \forall v \in T_pM, p \in M.
\end{align}
Lemma \ref{L^3 Agrees with C^0 2} gives the required volume convergence and Lemma \ref{e^(-2f_j) Control Summary} combined with \eqref{ThirdC0BelowEstimate}  implies that if $c_{\infty}=\displaystyle\lim_{k \rightarrow \infty}(\overline{e^{-f_k}})^{-2}$ then on a subsequence
\begin{align}\label{FourthC0BelowEstimate}
\left(1 - \frac{\bar{C}}{k}\right)\bar{g}_{\infty}=\left(1 - \frac{\bar{C}}{k}\right)c_{\infty} g_{\infty}(v,v) \le g_k(v,v), \quad \forall v \in T_pM, p \in M.
\end{align}
Again by Lemma \ref{L^3 Agrees with C^0 2} we know that on a subsequence \eqref{FourthC0BelowEstimate} agrees with the volume convergence and hence we have the hypotheses necessary in order to apply Theorem \ref{VolSWIFConv} to conclude that a subsequence of $M_j$ must converge to the flat torus $\bar{\Tor}^n_{\infty}$ in the volume preserving intrinsic flat sense.
\end{proof}

\begin{proof}[Proof of Theorem \ref{MainThm3}]
Since we have assumed that $\tilde{g}_{0,j} \rightarrow g_0$ in $C^1$ we note that 
\begin{align}\left(1 - \frac{C}{j}\right) g_0(v,v) \le \tilde{g}_{0,j}(v,v) &\le \left(1 + \frac{C}{j}\right) g_0(v,v), \quad \forall v \in T_pM, p \in M, \label{MetricBound}
\\ \int_{\mathbb{T}^n} e^{-2f_j} dV_{g_0} &\le C'.,\label{IntegralBound}
\end{align}
In addition, by \eqref{ScalarFormula} we find
\begin{align}
 2(n-1)\Delta^{\tilde{g}_{0,j}} f_j +(n-2)(n-1)|\nabla^{\tilde{g}_{0,j}} f_j|_{\tilde{g}_{0,j}}^2  \le \frac{e^{2f_j}}{j}+\tilde{R}_{0,j} \le \frac{e^{2f_j}}{j},
\end{align}
as well as,
\begin{align}\label{Essential PDE}
-(n-1)\Delta^{\tilde{g}_{0,j}} e^{-2f_j} + (n-1)(n+2) |\nabla^{\tilde{g}_{0,j}} e^{-f_j}|_{\tilde{g}_{0,j}}^2  \le \frac{1}{j} + \tilde{R}_{0,j} e^{-2f_j} \le \frac{1}{j}.
\end{align}
Now since $\tilde{g}_{0,j} \rightarrow g_0$ in $C^1$ we find
\begin{align}
\Delta^{\tilde{g}_{0,j}} e^{-2f_j} &= \Delta^{g_0} e^{-2f_j} + (\tilde{g}_{0,j}^{lm}-g_0^{lm}) \partial_l \partial_m e^{-2f_j} 
\\&+ \left[ \frac{1}{\sqrt{det(\tilde{g}_{0,j})}} \partial_l \left(\sqrt{det(\tilde{g}_{0,j})} \tilde{g}_{0,j}^{lm}\right) - \frac{1}{\sqrt{det(g_0)}} \partial_l \left(\sqrt{det(g_0)} g_0^{lm} \right) \right]\partial_m e^{-2f_j}
\\&= \left( 1 \pm \frac{C}{j} \right ) \Delta^{g_0} e^{-2f_j} \pm \frac{C_1}{j} |\nabla^{g_0} e^{-f_j}|_{g_0},
\end{align}
\begin{align}
|\nabla^{\tilde{g}_{0,j}} e^{-f_j}|_{\tilde{g}_{0,j}}^2 &= |\nabla^{g_0} e^{-f_j}|_{g_0}^2 + \left(\tilde{g}_{0,j}^{lm} - g_0^{lm}\right) \partial_l e^{-f_j} \partial_m e^{-f_j}
= \left( 1 \pm \frac{C}{j} \right )|\nabla^{g_0} e^{-f_j}|_{g_0}^2.
\end{align}
So we can rewrite \eqref{Essential PDE} as
\begin{align}\label{Essential PDE Rewrite}
-(n-1)\left( 1 \pm \frac{C}{j} \right ) \Delta^{g_0} e^{-2f_j} \mp 2 \frac{C_1}{j} |\nabla^{g_0} e^{-f_j}|_{g_0} + (n-1)(n+2) \left( 1 \pm \frac{C}{j} \right )|\nabla^{g_0} e^{-f_j}|_{g_0}^2 \le \frac{1}{j},
\end{align}
and then by using $ab \le \frac{1}{2} \left( a^2+b^2\right)$ we find
\begin{align}\label{Essential PDE Rewrite 2}
-(n-1)\left( 1 \pm \frac{C}{j} \right ) \Delta^{g_0} e^{-2f_j} -  \frac{C_1}{j} \left( 1 +|\nabla^{g_0} e^{-f_j}|_{g_0}^2 \right) + (n-1)(n+2) \left( 1 \pm \frac{C}{j} \right )|\nabla^{g_0} e^{-f_j}|_{g_0}^2 \le \frac{1}{j},
\end{align}
and by rewriting again we obtain
\begin{align}\label{Essential PDE Rewrite 3}
-(n-1)\left( 1 \pm \frac{C}{j} \right ) \Delta^{g_0} e^{-2f_j} +\left( (n-1)(n+2) \left( 1 \pm \frac{C}{j} \right )-\frac{C_1}{j} \right)|\nabla^{g_0} e^{-f_j}|_{g_0}^2 \le \frac{1+C_1}{j},
\end{align}
which for $j$ chosen large enough leads to the useful equation,
\begin{align}\label{Essential PDE Rewrite 2}
- \Delta^{g_0} e^{-2f_j}  &\le \frac{1+C_1}{(n-1)j\left( 1 \pm \frac{C}{j} \right )} \le \frac{C_2}{j}.
\end{align}
Lastly by integrating \eqref{Essential PDE Rewrite 3} for large enough $j$  we find that 
\begin{align}\label{SobolevBound}
\int_{\mathbb{T}^n} |\nabla^{g_0} e^{-f_j}|^2  dV_{g_0} \le \frac{(1+C_1)\Vol(M_0)}{j\left((n-1)(n+2)\left( 1 \pm \frac{C}{j} \right )-\frac{C_1}{j}\right)} \le \frac{C_3}{j}.
\end{align}
Hence all of the estimates of \ref{subsec:C^0 Convergence} and \ref{subsec:L^3 Conv} apply to $e^{2f_j}$ and $g_0$ since \eqref{MetricBound}, \eqref{IntegralBound}, \eqref{Essential PDE Rewrite 2}, and \eqref{SobolevBound} were the main tools used in these subsections combined with the uniform integrability, volume bounds, and diameter bound assumptions of Theorem \ref{MainThm3}. Thus we have the hypotheses necessary in terms of $g_0$ in order to apply Theorem \ref{VolSWIFConv} to conclude that a subsequence of $M_j$ must converge to a flat torus.
\end{proof}

\bibliography{ConfToriBib}{}
\bibliographystyle{plain}

\end{document}